\newcommand{\adjunction}{\@ifstar\named@adjunction\normal@adjunction}
\newcommand{\normal@adjunction}[4]{%
	#1\colon #2%
	\mathrel{\vcenter{%
			\offinterlineskip\m@th
			\ialign{%
				\hfil$##$\hfil\cr
				\longrightharpoonup\cr
				\noalign{\kern-.3ex}
				\smallbot\cr
				\longleftharpoondown\cr
			}%
	}}%
	#3 \noloc #4%
}
\newcommand{\named@adjunction}[4]{%
	#2%
	\mathrel{\vcenter{%
			\offinterlineskip\m@th
			\ialign{%
				\hfil$##$\hfil\cr
				\scriptstyle#1\cr
				\noalign{\kern.1ex}
				\longrightharpoonup\cr
				\noalign{\kern-.3ex}
				\smallbot\cr
				\longleftharpoondown\cr
				\scriptstyle#4\cr
			}%
	}}%
	#3%
}
\newcommand{\longrightharpoonup}{\relbar\joinrel\rightharpoonup}
\newcommand{\longleftharpoondown}{\leftharpoondown\joinrel\relbar}
\newcommand\noloc{%
	\nobreak
	\mspace{6mu plus 1mu}
	{:}
	\nonscript\mkern-\thinmuskip
	\mathpunct{}
	\mspace{2mu}
}
\newcommand{\smallbot}{%
	\begingroup\setlength\unitlength{.15em}%
	\begin{picture}(1,1)
		\roundcap
		\polyline(0,0)(1,0)
		\polyline(0.5,0)(0.5,1)
	\end{picture}%
	\endgroup
}
\newcommand*{\doublerightarrow}[2]{\mathrel{
		\settowidth{\@tempdima}{$\scriptstyle#1$}
		\settowidth{\@tempdimb}{$\scriptstyle#2$}
		\ifdim\@tempdimb>\@tempdima \@tempdima=\@tempdimb\fi
		\mathop{\vcenter{
				\offinterlineskip\ialign{\hbox to\dimexpr\@tempdima+1em{##}\cr
					\rightarrowfill\cr\noalign{\kern.5ex}
					\rightarrowfill\cr}}}\limits^{\!#1}_{\!#2}}}
\newcommand*{\triplerightarrow}[1]{\mathrel{
		\settowidth{\@tempdima}{$\scriptstyle#1$}
		\mathop{\vcenter{
				\offinterlineskip\ialign{\hbox to\dimexpr\@tempdima+1em{##}\cr
					\rightarrowfill\cr\noalign{\kern.5ex}
					\rightarrowfill\cr\noalign{\kern.5ex}
					\rightarrowfill\cr}}}\limits^{\!#1}}}
\newcommand{\LL}{\mathbb{L}}
\def\A{\mathcal{A}}
\def\B{\mathcal{B}}
\def\QQ{\mathbb{Q}}
\def\C{\mathcal{C}}
\def\D{\mathcal{D}}
\def\SS{\mathcal{S}}
\def\I{\mathcal{I}}
\def\cS{\mathcal{S}}
\def\P{\mathcal{P}}
\def\Q{\mathcal{Q}}
\def\U{\mathcal{U}}
\def\V{\mathcal{V}}
\def\O{\mathcal{O}}
\def\lrarsimeq{\overset{\simeq}{\lrar}}
\newcommand{\HSwarrow}{\kern0.05ex\vcenter{\hbox{\Huge\ensuremath{\Swarrow}}}\kern0.05ex}
\newcommand{\hSwarrow}{\kern0.05ex\vcenter{\hbox{\huge\ensuremath{\Swarrow}}}\kern0.05ex}
\newcommand{\LLSwarrow}{\kern0.05ex\vcenter{\hbox{\LARGE\ensuremath{\Swarrow}}}\kern0.05ex}
\newcommand{\LSwarrow}{\kern0.05ex\vcenter{\hbox{\Large\ensuremath{\Swarrow}}}\kern0.05ex}
\newcommand{\HSearrow}{\kern0.05ex\vcenter{\hbox{\Huge\ensuremath{\Searrow}}}\kern0.05ex}
\newcommand{\hSearrow}{\kern0.05ex\vcenter{\hbox{\huge\ensuremath{\Searrow}}}\kern0.05ex}
\newcommand{\LLSearrow}{\kern0.05ex\vcenter{\hbox{\LARGE\ensuremath{\Searrow}}}\kern0.05ex}
\newcommand{\LSearrow}{\kern0.05ex\vcenter{\hbox{\Large\ensuremath{\Searrow}}}\kern0.05ex}
\newcommand{\HDownarrow}{\kern0.05ex\vcenter{\hbox{\Huge\ensuremath{\Downarrow}}}\kern0.05ex}
\newcommand{\hDownarrow}{\kern0.05ex\vcenter{\hbox{\huge\ensuremath{\Downarrow}}}\kern0.05ex}
\newcommand{\LLDownarrow}{\kern0.05ex\vcenter{\hbox{\LARGE\ensuremath{\Downarrow}}}\kern0.05ex}
\newcommand{\LDownarrow}{\kern0.05ex\vcenter{\hbox{\Large\ensuremath{\Downarrow}}}\kern0.05ex}
\newcommand{\HUparrow}{\kern0.05ex\vcenter{\hbox{\Huge\ensuremath{\Uparrow}}}\kern0.05ex}
\newcommand{\hUparrow}{\kern0.05ex\vcenter{\hbox{\huge\ensuremath{\Uparrow}}}\kern0.05ex}
\newcommand{\LLUparrow}{\kern0.05ex\vcenter{\hbox{\LARGE\ensuremath{\Uparrow}}}\kern0.05ex}
\newcommand{\LUparrow}{\kern0.05ex\vcenter{\hbox{\Large\ensuremath{\Uparrow}}}\kern0.05ex}
\newcommand\restr[2]{{
		\left.\kern-\nulldelimiterspace 
		#1 
		\vphantom{\big|} 
		\right|_{#2} 
}}
\DeclareMathOperator{\Id}{Id}
\DeclareFontFamily{OT1}{pzc}{}
\DeclareFontShape{OT1}{pzc}{m}{it}{<-> s * [1.10] pzcmi7t}{}
\DeclareMathAlphabet{\mathpzc}{OT1}{pzc}{m}{it}
\DeclareMathOperator{\Alg}{Alg}
\DeclareMathOperator{\Free}{Free}
\DeclareMathOperator{\op}{op}
\DeclareMathOperator{\Map}{Map}
\DeclareMathOperator{\Sets}{Sets}
\DeclareMathOperator{\Cat}{Cat}
\DeclareMathOperator{\Fun}{Fun}
\DeclareMathOperator{\Ob}{Ob}
\DeclareMathOperator{\Ho}{Ho}
\DeclareMathOperator{\Op}{Op}
\DeclareMathOperator{\Tw}{Tw}
\DeclareMathOperator{\BMod}{BMod}
\DeclareMathOperator{\Aut}{Aut}
\DeclareMathOperator{\h}{h}
\DeclareMathOperator{\sMod}{sMod}
\DeclareMathOperator{\defi}{def}
\DeclareMathOperator{\Hom}{Hom}
\DeclareMathOperator{\rL}{L}
\DeclareMathOperator{\sN}{N}
\DeclareMathOperator{\Coll}{Coll}
\DeclareMathOperator{\Set}{Set}
\DeclareMathOperator{\E}{E}
\DeclareMathOperator{\End}{End}
\DeclareMathOperator{\Fin}{Fin}
\DeclareMathOperator{\cor}{core}
\DeclareMathOperator{\sB}{b}
\def\x{\overset}
\def\Hom{\textrm{Hom}}
\def\End{\textrm{End}}
\newcommand{\tgpd}{\kern0.05ex\vcenter{\hbox{\footnotesize\ensuremath{2}}}\kern0.05ex\mathcal{G}pd} 
\def\lrar{\longrightarrow}
\def\ovl{\overline}
\newtheorem{theorem}{Theorem}[section]
\newtheorem{lemma}[theorem]{Lemma}
\theoremstyle{definition}
\newtheorem{definition}[theorem]{Definition}
\newtheorem{example}[theorem]{Example}
\newtheorem{construction}[theorem]{Construction}
\theoremstyle{definition}
\newtheorem{remark}[theorem]{Remark}
\theoremstyle{definition}
\newtheorem{notation}[theorem]{Notation}
\theoremstyle{definition}
\theoremstyle{corollary}
\newtheorem{corollary}[theorem]{Corollary}
\theoremstyle{proposition}
\newtheorem{proposition}[theorem]{Proposition}
\theoremstyle{observation}
\numberwithin{equation}{section}
\theoremstyle{definition}
\newtheorem{conv}[theorem]{Convention}
\begin{document}

\title{Mapping spaces between operads in relation to bimodules}

\author{Truong Hoang}
\address{DEPARTMENT OF MATHEMATICS, HANOI FPT UNIVERSITY, VIET NAM.}

\email{truonghm@fe.edu.vn}

\subjclass[2020]{18G31, 18M60, 18M75, 18N40.}

\keywords{dg operad, simplicial operad, mapping spaces of operads.}

\begin{abstract} 
	 This paper investigates mapping spaces between enriched operads and relates these spaces to those between operadic bimodules via convenient fiber sequences. The main statements hold for simplicial operads, operads enriched in simplicial modules over a commutative ring, and for connective dg operads, with the last case relying on an operadic version of the Dold-Kan correspondence.
\end{abstract}

\maketitle

\tableofcontents

\section{Introduction}\label{s:intro}

In abstract homotopy theory, \textbf{(derived) mapping spaces} serve as a fundamental tool, encoding higher homotopical information of maps between objects. Such spaces are typically rigorously defined and studied within the $\infty$-categorical setting, or any framework suitable for homotopy theory, such as model (or semi-model) categories. In this paper, we study mapping spaces between operads enriched over a \textbf{symmetric monoidal model category} (see \cite{Hovey}). Specifically, we establish fiber sequences that relate the mapping spaces of operads to those of their associated bimodules.

\smallskip

Let $(\cS, \otimes, 1_{\cS})$ be a symmetric monoidal model category, which we refer to as the \textbf{base category}. For a set $C$, we let $\Op_{C}(\cS)$ denote the \textit{category of $C$-colored operads in $\cS$}, endowed with the  model structure induced from that on $\cS$ (see $\S$\ref{s:22}). We begin by investigating the mapping spaces between these objects, building upon the insights from Dwyer-Hess \cite{Hess}.

\smallskip

Suppose we are given a map $f : \P\lrar\O$ in $\Op_C(\mathcal{S})$. We denote by $\BMod(\P)$ the \textit{category of $\P$-bimodules}, equipped again with the model structure transferred from $\cS$. We also regard $\O$ as a $\P$-bimodule with the structure induced by $f$.  

\begin{theorem}\label{t:main1}(\ref{t:goodness}) Suppose that $\P$ is a cofibrant operad satisfying the stability hypothesis presented in Definition \ref{S8}. Then there is a fiber sequence of spaces
	\begin{equation}\label{eq:goodnessmain2}
		\Omega\Map^{\h}_{\Op_C(\mathcal{S})}(\P, \O) \lrar \Map^{\h}_{\BMod(\P)}(\P, \O) \lrar \underset{c\in C}{\prod} \Map^{\h}_{\cS}(1_{\cS}, \O(c;c))
	\end{equation}
in which the loop space $\Omega\Map^{\h}_{\Op_C(\mathcal{S})}(\P, \O)$ is taken at the base point $f$.
\end{theorem}

\begin{remark} This theorem extends a result from \cite{Hess}, originally established in the context of \textbf{non-symmetric simplicial operads}. Furthermore, for the case of (symmetric) simplicial operads, in which $\O$ satisfies $\O(c;c)\simeq *$ for every color $c$, then \eqref{eq:goodnessmain2} yields a weak equivalence
	$$ \Omega\Map^{\h}_{\Op_C(\mathcal{S})}(\P, \O) \lrarsimeq \Map^{\h}_{\BMod(\P)}(\P, \O),$$
as observed by Ducoulombier \cite{Julien}.
\end{remark}

Next we denote by $\Op(\cS)$ the \textit{category of $\cS$-enriched operads} (with non-fixed sets of colors), endowed with the \textbf{Dwyer-Kan  model structure} (see $\S$\ref{s:22}). A relation between the mapping spaces between enriched operads and those between operadic bimodules is achieved by combining \eqref{eq:goodnessmain2} with the statement below. 

\smallskip

Let $\Q\in\Op_{D}(\mathcal{S})$ be another operad colored in some set $D$. Suppose we are given a map $g : C \lrar D$ between the sets of colors. We will write $g^*\Q$ for the $C$-colored operad induced by the restriction of colors, and write $\Q_1$ for the \textit{underlying category} of $\Q$ (see $\S$\ref{s:21}). 
\begin{theorem}\label{t:simplifying}(\ref{p:simplifying})  There is a fiber sequence of spaces
	\begin{equation}\label{eq:simplifying1}
		\Map^{\h}_{\Op_C(\cS)}(\P,g^*\Q) \lrar \Map^{\h}_{\Op(\cS)}(\P,\Q) \lrar \underset{C}{\prod}\cor(\Q_1)
	\end{equation}
	in which $\cor(\Q_1)$ refers to the \textit{core of} $\Q_1$ (see Notation \ref{no:core}).
\end{theorem}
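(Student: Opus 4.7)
The plan is to realize the stated fiber sequence as arising from precomposition with the initial $C$-colored operad. Let $I_C \in \Op_C(\cS)$ denote the initial $C$-colored operad, having $I_C(c;c) = 1_\cS$ for each $c \in C$ and the initial object elsewhere; equivalently $I_C = \iota(C_{\mathrm{disc}})$, where $\iota : \Cat(\cS) \hookrightarrow \Op(\cS)$ is the fully faithful inclusion regarding a category as an operad concentrated in arity one, and $C_{\mathrm{disc}}$ denotes the discrete $\cS$-category on $C$. The canonical map $\iota_C : I_C \lrar \P$ picking out the units of $\P$ induces by precomposition a natural map
$$ \Phi : \Map^{\h}_{\Op(\cS)}(\P, \Q) \lrar \Map^{\h}_{\Op(\cS)}(I_C, \Q), $$
and I would show that $\Phi$ is, after a target identification, the second map of the desired fiber sequence.

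To identify the target of $\Phi$, observe that $\iota$ is left adjoint to the underlying-category functor $U : \Op(\cS) \to \Cat(\cS)$, $\O \mapsto \O_1$, and one checks that $\iota$ is left Quillen (or at least homotopical) for the DK model/relative structures, since $U$ preserves DK equivalences and fibrations arity-by-arity. The resulting derived adjunction yields
$$ \Map^{\h}_{\Op(\cS)}(I_C, \Q) \simeq \Map^{\h}_{\Cat(\cS)}(C_{\mathrm{disc}}, \Q_1) \simeq \prod_C \cor(\Q_1), $$
where the last step uses $C_{\mathrm{disc}} = \coprod_{c \in C} \mathrm{pt}$ together with the standard DK identification $\Map^{\h}_{\Cat(\cS)}(\mathrm{pt}, -) \simeq \cor(-)$.

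For the homotopy fiber of $\Phi$ over a point $g \in \prod_C \cor(\Q_1)$ corresponding to a choice of objects $g : C \to \Ob(\Q)$, I would pass to resolutions: a cofibrant resolution $\P^{\mathrm{cof}} \to \P$ in $\Op_C(\cS)$ (which remains cofibrant in $\Op(\cS)$ since its generating cofibrations preserve the color set) together with a fibrant resolution $\Q \to \Q^{\mathrm{fib}}$. Then $I_C \to \P^{\mathrm{cof}}$ is a cofibration in $\Op(\cS)$, so $\Phi$ is realized as a Kan fibration whose strict fiber over $g$ consists of operad maps $\P^{\mathrm{cof}} \to \Q^{\mathrm{fib}}$ with prescribed color assignment $g$ — these are exactly $C$-colored operad maps $\P^{\mathrm{cof}} \to g^*\Q^{\mathrm{fib}}$, identifying the homotopy fiber with $\Map^{\h}_{\Op_C(\cS)}(\P, g^*\Q)$.

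The main obstacle is the verification of the compatibility of the DK structures on $\Op(\cS)$, $\Op_C(\cS)$, and $\Cat(\cS)$ required above — specifically that $\iota$ is left Quillen and that a cofibrant replacement of $\P$ in $\Op_C(\cS)$ remains cofibrant in $\Op(\cS)$ with $I_C \to \P^{\mathrm{cof}}$ a cofibration. These should follow from the explicit description of cofibrations via cellular/free extensions that preserve the set of colors, though they may require care depending on the running assumptions on $\cS$; in the event that one of these model-categorical properties fails to hold, I would instead argue directly at the level of the underlying $\infty$-categories, using that $\Op(\cS)_\infty$ fibers over the $\infty$-category of $\cS$-enriched categories in a manner compatible with the color-set decomposition.
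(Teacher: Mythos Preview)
Your proposal is correct and follows essentially the same route as the paper: both arguments compute the homotopy fiber of the precomposition map $\Map^{\h}_{\Op(\cS)}(\P,\Q) \lrar \Map^{\h}_{\Op(\cS)}(\I_C,\Q)$ induced by the unit $\I_C \to \P$, identify the target via the adjunction $\Cat(\cS) \rightleftarrows \Op(\cS)$, and identify the fiber with $\Map^{\h}_{\Op_C(\cS)}(\P,g^*\Q)$. The only difference is packaging: the paper phrases the fiber computation abstractly via the standard fiber sequence for mapping spaces in an under-$\infty$-category (citing \cite{Luriehtt}, 5.5.5.12) together with the Quillen adjunction $\Op_C(\cS)_{\I_C/} \rightleftarrows \Op(\cS)_{\I_C/}$, whereas you realize $\Phi$ concretely as a Kan fibration after cofibrant/fibrant replacement and read off the strict fiber. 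The model-categorical facts you flag as potential obstacles---that a cofibrant object of $\Op_C(\cS)$ remains cofibrant in $\Op(\cS)$, and that $\I_C \to \P^{\mathrm{cof}}$ is then a cofibration---are exactly what the paper invokes as well (via \cite{Hoang}, Observation 4.1.1(ii)).
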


For an illustration, when $\C$ is a (fibrant) simplicial category, we have $\cor(\C) \simeq \sN(\C)^\simeq$, ie, the maximal $\infty$-groupoid contained in the  nerve of $\C$.

\smallskip

Combining the two theorems above, we obtain a significant result as follows. In the following, we will assume that $\cS$ satisfies additional conditions, particularly the \textbf{invertibility hypothesis} introduced by Lurie. Suppose we are given a map $f : \P \lrar \O$ in $\Op(\cS)$. 

\begin{theorem}\label{t:double1}(\ref{t:double}) Suppose that $\P$ is a cofibrant operad satisfying the stability hypothesis. Then there is a natural weak equivalence 
	$$  \Omega^2\Map^{\h}_{\Op(\cS)}(\P,\O) \simeq \Omega\Map^{\h}_{\BMod(\P)}(\P,f^*\O) $$
	in which the double loop space on the left is taken at the base point $f$, and while the loop space on the right is taken at the induced map $\P\lrar f^*\O$.
\end{theorem}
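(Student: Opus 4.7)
The plan is to derive the equivalence by combining the fiber sequences \eqref{eq:goodnessmain2} and \eqref{eq:simplifying1}, and then comparing their appropriately looped versions. To begin, I would set $C := C_\P$ and let $g : C \to C_\O$ denote the map on colors induced by $f$, so that $f^*\O$ coincides with $g^*\O$ in the notation of Theorem \ref{t:simplifying}.

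Applying Theorem \ref{t:simplifying} with $\Q = \O$ and basepoint $f$, and extending its fiber sequence via the standard long fiber sequence, realizes $\Omega^2\Map^{\h}_{\Op(\cS)}(\P,\O)$ as the homotopy fiber of the iterated connecting map
$$ \Omega^2 \prod_C \cor(\O_1) \lrar \Omega\Map^{\h}_{\Op_C(\cS)}(\P, f^*\O). $$
In parallel, applying Theorem \ref{t:main1} to the good cofibrant operad $\P$ together with $f^*\O$, pointed by the induced map $\P \to f^*\O$, and looping once, exhibits $\Omega\Map^{\h}_{\BMod(\P)}(\P, f^*\O)$ as the homotopy fiber of
$$ \Omega\prod_{c \in C} |(f^*\O)(c;c)| \lrar \Omega\Map^{\h}_{\Op_C(\cS)}(\P, f^*\O). $$
Since the target spaces agree, it then suffices to produce an equivalence between the two sources that is compatible with the connecting maps.

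For each $c \in C$, the based loop space $\Omega_{f(c)}\cor(\O_1)$ is the automorphism space $\Aut_{\O_1}(f(c))$ of self-equivalences of $f(c)$ in the underlying category of $\O$. The invertibility hypothesis on $\cS$ ensures that this automorphism space consists precisely of those path components of $|\O(f(c);f(c))|$ whose elements are invertible in $\O_1$; since $\id_{f(c)}$ is always invertible, the component of $\Aut_{\O_1}(f(c))$ containing the identity coincides with the corresponding component of $|\O(f(c);f(c))|$. Looping once more at $\id_{f(c)}$ and taking the product over $c \in C$ then yields the desired equivalence $\Omega^2\prod_C \cor(\O_1) \simeq \Omega\prod_{c}|(f^*\O)(c;c)|$.

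The hard part will be verifying that this identification intertwines the two connecting maps into $\Omega\Map^{\h}_{\Op_C(\cS)}(\P,f^*\O)$. Both should arise from a common ``automorphism-of-the-identity'' construction that records how $f$ is infinitesimally deformed along invertible unary operations, one interpreting these deformations as loops in the core of $\O_1$ and the other as loops at $\id_{f(c)}$ in the unit operations of $f^*\O$. A careful naturality argument tracing through how the fiber sequences of Theorems \ref{t:main1} and \ref{t:simplifying} are produced should show that the two maps agree up to homotopy; once this is established, the equivalence of homotopy fibers gives the claimed weak equivalence.
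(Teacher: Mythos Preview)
Your overall strategy matches the paper's: both sides are realized as homotopy fibers of maps landing in the common target $\Omega\Map^{\h}_{\Op_C(\cS)}(\P,f^*\O)$, and the problem reduces to identifying the two sources compatibly with the connecting maps. Your identification $\Omega^2\prod_C\cor(\O_1)\simeq\Omega\prod_c|(f^*\O)(c;c)|$ via the invertibility hypothesis is also morally the same ingredient the paper uses.

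Where your proposal is incomplete is exactly the step you flag as ``the hard part''. Saying that ``a careful naturality argument tracing through how the fiber sequences are produced should show that the two maps agree up to homotopy'' is not a proof, and it is not obvious that unwinding the two constructions directly will produce a tractable comparison. The two connecting maps arise from entirely different sources---one from the bimodule-to-operad adjunction underlying Theorem~\ref{t:main1}, the other from the changing-colors adjunction underlying Theorem~\ref{t:simplifying}---and there is no a priori common framework in which their compatibility is visible.

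The paper's solution is to make the comparison \emph{representable}. It rewrites both fiber sequences so that every term is of the form $\Map^{\h}_{\Op(\cS)_{\P/}}(-,\O)$ for explicit objects in $\Op(\cS)_{\P//\P}$: the maps $\alpha$ and $\beta$ are induced by maps $\Sigma(\ovl{\alpha}):\Sigma(\A)\to\Sigma(\U)$ and $\Sigma(\ovl{\beta}):\Sigma(\B)\to\Sigma(\V)$ between suspensions of concrete pushouts built from $\I_C\otimes[1]_\cS$ and $\I_C\otimes\mathcal{E}$. The invertibility hypothesis is then invoked not on the target side (as you do) but on the source side, to prove that the comparison maps $\Sigma(\varphi):\Sigma(\A)\to\Sigma(\B)$ and $\Sigma(\rho):\Sigma(\U)\to\Sigma(\V)$ are weak equivalences (Lemma~\ref{l:singlesus}). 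This yields a commutative square of representing objects with two sides weak equivalences, and applying $\Map^{\h}_{\Op(\cS)_{\P/}}(-,\O)$ gives the compatibility you need for free. Without some such device, your step~4 remains a gap.
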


\begin{remark} The base categories of primary interest in this paper are \textit{simplicial sets}, \textit{simplicial $\mathbf{k}$-modules}, and \textit{non-negatively dg $R$-modules}, where $\textbf{k}$ and $R$ are commutative rings, with $R$ containing the field $\QQ$ (see Example \ref{ex:basis}). When $\cS$ is any of these categories, Theorems \ref{t:main1} and \ref{t:double1} remain valid for every $\Sigma$-cofibrant operad $\P$.
\end{remark}

\section{Background and notations}\label{s:backgr}

\smallskip

\subsection{Operads and modules over an operad}\label{s:21}

Let $(\cS, \otimes, 1_\cS)$ be a symmetric monoidal category. For a set $C$, regarded as the \textbf{set of colors}, we denote by $\Op_C(\cS)$ the \textbf{category of (symmetric) $C$-colored operads in $\cS$}. For more information, $\Op_C(\cS)$ agrees with the category of monoids in the monoidal category $(\Coll_C(\cS), \circ, \I_C)$ in which

\smallskip

$\bullet$ an object of $\Coll_C(\cS)$, referred to as a $C$-\textbf{collection} (in $\cS$), is a collection $$M=\{M(c_1,\cdots,c_n;c) \, | \, c_i,c\in C, n\geq 0\}$$ of objects in $\cS$ equipped with a right $\Sigma_n$-action whose data consist of maps of the form 
$$\sigma^{*} : M(c_1,\cdots,c_n;c) \lrar M(c_{\sigma(1)},\cdots,c_{\sigma(n)};c)$$
for every sequence $(c_1,\cdots,c_n;c)$ and every permutation $\sigma\in \Sigma_n$,

\smallskip

$\bullet$ $-\circ-$ is the well known \textbf{composite product} of $C$-collections, and

\smallskip

$\bullet$ the monoidal unit $\I_C$ is given by $\I_C(c;c)=1_{\cS}$ for every $c\in C$, and  $\emptyset_\cS$ (ie, the initial object of $\cS$) otherwise. (See, for example, \cite{Pavlov} for more details on these definitions.)

\smallskip

Let $\P\in\Op_C(\cS)$ be a $C$-colored operad in $\cS$.  By considering $\P$ as a monoid object, it is natural to define the notions of \textbf{left (right) modules} and \textbf{bimodules} over $\P$, as described in the definition below.

\begin{definition} \label{d:operadicmodules}
	
	\begin{enumerate}[(1)]
		
		\item A \textbf{left $\mathcal{P}$-module} is a $C$-collection $M$  endowed with an action map $\mathcal{P}\circ M\lrar M$ whose data consist of $\Sigma_*$-equivariant maps of the form
		$$  \P(c_1,\cdots,c_n;c) \otimes M(d_{1,1},\cdots,d_{1,k_1};c_1) \otimes \cdots \otimes M(d_{n,1},\cdots,d_{n,k_n};c_n)$$
		$$ \lrar M(d_{1,1},\cdots,d_{1,k_1},\cdots,d_{n,1},\cdots,d_{n,k_n};c).$$
		These are required to satisfy the essential associativity and unitality axioms for left modules.
		
		\item Dually, a \textbf{right $\mathcal{P}$-module} is a $C$-collection $M$ that is equipped with an action map $M \circ \P \lrar M$ whose data consist of $\Sigma_*$-equivariant maps of the form
		$$  M(c_1,\cdots,c_n;c) \otimes \P(d_{1,1},\cdots,d_{1,k_1};c_1) \otimes \cdots \otimes \P(d_{n,1},\cdots,d_{n,k_n};c_n) $$
		$$\lrar M(d_{1,1},\cdots,d_{1,k_1},\cdots,d_{n,1},\cdots,d_{n,k_n};c) $$
		satisfying the associativity and unitality axioms for right modules.
		
		\item A $\P$\textbf{-bimodule} is a $C$-collection $M$ structured as both a left and a right $\P$-module, with these structures satisfying a compatibility requirement.
		
	\end{enumerate}
\end{definition}

Moreover, a \textbf{$\P$-algebra} is by definition a left $\P$-module $A$ with $A(c_1,\cdots,c_n;c)=\emptyset_\cS$ whenever $n\neq 0$. We will denote by $\BMod(\P)$ the \textit{category of $\P$-bimodules}, and by $\Alg_\P(\cS)$ the \textit{category of $\P$-algebras}.

\smallskip

We recall the following construction from \cite{Hoang}, which establishes an operad encoding the category of $\P$-bimodules. (See also \cite[$\S$2.1.1]{Julien1}.)

\begin{construction}\label{con:BP} We denote by $\Fin$ the \textit{skeleton of the category of finite sets}, where the objects are written as $\underline{0} := \emptyset$ and $\underline{m} := \{1,\cdots,m\}$ for $m\geq1$. We construct an $\cS$-enriched operad, denoted $\textbf{B}^{\mathcal{P}}$, as follows. The set of colors is given by $C$-sequences $\{(c_1,\cdots,c_n;c) \, | \, c_i,c\in C, n\geq 0\}$. A typical space of $n$-ary operations is given by
	$$ \textbf{B}^{\mathcal{P}}\left( \,    \left(c_1,\cdots,c_{r_1}; c^{(1)} \right) ,    \left(c_{r_1+1},\cdots,c_{r_1+r_2}; c^{(2)} \right) , \cdots ,  \left(c_{r_1+\cdots+r_{n-1}+1},\cdots,c_{r_1+\cdots+r_{n}}; c^{(n)} \right)  ;   \left(d_1,\cdots,d_m;d \right) \, \right) $$
	$$ := \bigsqcup_{\underline{m} \x{f}{\lrar} \underline{r_1+\cdots+r_{n}}} \left [ \mathcal{P}\left (c^{(1)},\cdots,c^{(n)};d \right ) \otimes \bigotimes_{i=1,\cdots,r_1+\cdots+r_{n}} \mathcal{P} \left (\{d_j\}_{j\in f^{-1}(i)};c_i \right ) \right ] $$
	where the coproduct ranges over the hom-set $\Hom_{\Fin}(\underline{m} , \underline{r_1+\cdots+r_{n}})$. The operad structure of $\textbf{B}^{\mathcal{P}}$ is canonically defined via the structure of $\P$.  
\end{construction}

The main interest in this construction is as follows.

\begin{proposition}\label{p:BP} \textup{(\cite[$\S$2.2]{Hoang})} There is a categorical isomorphism 
	$$ \Alg_{\textbf{B}^\P}(\cS) \cong \BMod(\P)$$
between $\textbf{B}^\P$-algebras and $\P$-bimodules.
\end{proposition}

One can organize all the operads enriched in $\cS$, with arbitrary sets of colors, into a single category as follows. We associate to each map $\alpha : C \lrar D$ of sets a  functor $\alpha^{*} : \Op_D(\SS) \lrar \Op_C(\SS)$ given by taking $\Q\in\Op_D(\SS)$ to $\alpha^{*}\Q$ with 
$$ \alpha^{*}\Q(c_1,\cdots,c_n;c) := \Q(\alpha(c_1),\cdots,\alpha(c_n);\alpha(c)) .$$
\begin{definition} The \textbf{category of $\cS$-enriched operads}, denoted $\Op(\cS)$, is defined to be the \textit{contravariant Grothendieck construction}
	$$ \Op(\SS) := \int_{C\in \Sets} \Op_C(\SS).$$
	More precisely, an object of $\Op(\SS)$ is a pair $(C,\P)$ with $C\in \Sets$ and $\P\in \Op_C(\SS)$, and such that a morphism $(C,\P) \lrar (D,\Q)$ consists of a map $\alpha : C \lrar D$ of sets and a map $f :  \P \lrar \alpha^{*}\Q$ of $C$-colored operads. 
\end{definition}

\begin{remark} For a $C$-colored operad $\P$ in $\cS$, there exists an  adjunction between under categories:
\begin{equation}\label{eq:signi}
	\adjunction*{}{\Op_C(\cS)_{\P/}}{\Op(\cS)_{\P/}}{}
\end{equation}
in which the left adjoint is given by the obvious embedding, and while the right adjoint is induced by the restriction of colors.
\end{remark}

\begin{remark}\label{r:signi1} We will write $\Cat(\cS)$ for the \textbf{category of $\cS$-enriched categories}. Clearly each $\cS$-enriched category can be identified with an $\cS$-enriched operad concentrated in arity $1$. Conversely, each object $\P \in \Op(\cS)$ admits an \textbf{underlying category}, denoted $\P_1 \in \Cat(\cS)$, and formed by the collection of $1$-ary operations of $\P$. Moreover, these two  constructions determine an adjunction
\begin{equation}\label{eq:signi1}
	\adjunction*{}{\Cat(\cS)}{\Op(\cS)}{}.
\end{equation}
\end{remark}

\smallskip

\subsection{Operadic model structures}\label{s:22}

We now recall the homotopy theory of algebras over an operad and the homotopy theory of enriched operads themselves.

\smallskip

Let us assume further that $\cS$ is a \textbf{symmetric monoidal model category}. By convention, we will say that $\cS$ is \textbf{admissible} if for every set $C$ and for every operad $\P \in \Op_{C}(\cS)$, the category of $\P$-algebras inherits a model structure transferred from the product model structure on $\cS^{\times C}$ along the free-forgetful adjunction
	$$ \adjunction*{}{\cS^{\times C}}{\Alg_\P(\cS)}{}$$
(see eg, \cite{Pavlov, Ieke2}). Moreover, $\P\in \Op_{C}(\cS)$ is $\Sigma$-\textbf{cofibrant} if it is cofibrant as a $C$-collection with respect to the projective model structure.

\begin{remark}\label{r:encompass} Note that for any set $C$, there exists an operad in $\cS$, denoted $S^C$, such that there is an equivalence of categories $\Alg_{S^C}(\cS) \simeq \Op_C(\cS)$ (cf \cite[$\S$3]{Guti}). Hence, the above admissibility condition already encompasses the transferred model structure on the category $\Op_C(\cS)$. The same observation applies to operadic bimodules, due to Proposition \ref{p:BP}. 
\end{remark}

The \textbf{homotopy category} of an $\cS$-enriched category $\C$, denoted $\Ho(\C)$, is the ordinary category with the same objects as $\C$ and morphisms defined by  $$\Hom_{\Ho(\C)}(x,y) := \Hom_{\Ho(\cS)}(1_\cS,\Map_\C(x,y)) .$$
By convention, the \textbf{homotopy category of an operad} $\P \in \Op(\cS)$ is $$\Ho(\P) := \Ho(\P_1)$$
(see Remark \ref{r:signi1} for notation). Moreover, a map $f : \P \lrar \Q$ in $\Op(\cS)$ is called a \textit{levelwise weak equivalence} (resp. \textit{fibration, trivial fibration}, etc) if for every sequence $(c_1,\cdots,c_n;c)$ of colors of $\P$, the induced map $$\P(c_1,\cdots,c_n;c) \lrar \Q(f(c_1),\cdots,f(c_n) ; f(c))$$ is a weak equivalence (resp. fibration, trivial fibration, etc) in $\cS$. In what follows, we recall the two model structures on enriched operads, according to Caviglia's \cite{Caviglia}.

\begin{definition}\label{d:DKop} A map $f : \P \lrar \Q$ in $\Op(\cS)$ is called a \textbf{Dwyer-Kan equivalence} if it is a levelwise weak equivalence and such that the induced functor $$\Ho(f):\Ho(\P)\lrar\Ho(\Q)$$ between homotopy categories is essentially surjective.
\end{definition}

\begin{definition}\label{d:DKope} The \textbf{Dwyer-Kan model structure on} $\Op(\cS)$ is the one whose weak equivalences are the Dwyer-Kan equivalences and whose trivial fibrations are the levelwise trivial fibrations surjective on colors. 
\end{definition}

\begin{definition}\label{d:canoope} The \textbf{canonical model structure on} $\Op(\cS)$ is the one whose fibrant objects are the levelwise fibrant operads and whose trivial fibrations are the same as those of the Dwyer-Kan model structure. 
\end{definition}

We will use the term \textit{canonical equivalence} (resp. \textit{fibration}) to refer to a weak equivalence (resp. fibration) with respect to the canonical model structure.

\begin{remark}\label{rem:catop} The two model structures on $\Op(\cS)$ naturally generalize the corresponding model structures on $\Cat(\cS)$, so that the embedding $\Cat(\cS) \lrar \Op(\cS)$ creates all three classes of structure maps. For more information, the Dwyer-Kan model structure on $\Cat(\cS)$ is developed in \cite{Luriehtt, Muro}, and can be thought of as a generalization of the Dwyer-Kan homotopy theory of simplicial categories introduced by Bergner \cite{Bergner}. On the other hand, the canonical model structure on $\Cat(\cS)$ is established by Berger-Moerdijk \cite{Ieke}, as another approach to the homotopy theory of enriched categories.  
\end{remark}

\begin{remark}\label{rem:canfib} A map $f$ in $\Op(\cS)$ is a Dwyer-Kan equivalence if and only if it is a levelwise weak equivalence and such that its underlying map in $\Cat(\cS)$ is a Dwyer-Kan equivalence. On other hand, as originally introduced in \cite{Caviglia}, $f$ is a canonical equivalence (resp. fibration) if and only if it is a levelwise weak equivalence (resp. fibration) and its underlying map in $\Cat(\cS)$ is a canonical equivalence (resp. fibration).
\end{remark}

\begin{remark}\label{rem:coincide} In practice, the Dwyer-Kan and canonical model structures usually coincide. For example, this holds true as soon as $\cS$ satisfies the assumptions of \cite[Proposition 2.20]{Ieke}. 
\end{remark}

\begin{conv}\label{conv:adsu} We will say that $\cS$ is \textbf{sufficient} if it is admissible, and such that the two model structures on $\Op(\cS)$ (as well as on $\Cat(\cS)$) exist and coincide. In this situation, we will use the term \textit{Dwyer-Kan} to refer to the common model structure.
\end{conv}

\begin{example}\label{ex:basis} We discuss the three base categories that we are concerned with in this paper. 
	
	\smallskip
	
(i) The first one is $\Set_\Delta$ the category of \textit{simplicial sets}, equipped with the Cartesian monoidal structure and with the classical (Kan-Quillen) model structure. The category $\Set_\Delta$ is sufficient.
	
	\smallskip

(ii) For a commutative ring $\textbf{k}$, we let $\sMod(\textbf{k})$ denote the category of \textit{simplicial $\textbf{k}$-modules}. This category is equipped with the degreewise tensor product over $\textbf{k}$ and with a model structure transferred from the classical model structure on $\Set_\Delta$. As in the case above, the category $\sMod(\textbf{k})$ is sufficient.

\smallskip

(iii) Let $R$ be a commutative ring containing the field $\QQ$ of rational numbers. We also regard the category $\C_{\geqslant0}(R)$ of \textit{connective dg $R$-modules} (ie, the differential graded $R$-modules concentrated in non-negative degrees). It comes equipped with the usual tensor product; along with the \textit{projective model structure}, whose weak equivalences are precisely the quasi-isomorphisms and whose fibrations are given by the chain maps that are surjective at every positive degree. One can show that the category $\C_{\geqslant0}(R)$ is sufficient as well.

\smallskip

 For more details, we refer the reader to \cite{Pavlov, Caviglia}.
\end{example}

\smallskip

\subsection{Operadic Dold-Kan correspondence}\label{s:23}

We recall a version of the Dold-Kan correspondence for operadic algebras and describe some noteworthy corollaries derived from it. Let $R$ be a commutative ring containing the field $\QQ$. As usual, we let $$ \sN :  \sMod_{R} \lrar  \C_{\geqslant0}(R)$$ denote the \textbf{normalization functor}. It is known that $\sN$ determines a categorical equivalence (cf \cite{Dold}). However, this no longer holds when extended to the context of operadic algebras (see \cite{Schwede}); instead, it can still be realized as a Quillen equivalence, as presented below.  

\smallskip

Recall that the \textit{shuffle maps} $\nabla : \sN A \otimes \sN B \lrarsimeq \sN(A \otimes B)$ for $A, B \in \sMod_{R}$, which are weak equivalences, establish a \textit{lax symmetric monoidal} structure on $\sN$ (see, eg, \cite{Schwede, Weibel, Goerss} for details). Now, let $C$ be a fixed set of colors, and $\P$ a $C$-colored operad in $\sMod_{R}$. Applying the normalization functor levelwise to $\P$ produces a $C$-colored operad in $\C_{\geqslant0}(R)$, denoted $\sN\P$. Moreover, applying the normalization functor to each $\P$-algebra $A$ yields an $\sN\P$-algebra, written as $\sN_\P(A) := \{\sN A(c)\}_{c\in C}$. 

\smallskip

The following theorem follows as a specific instance of \cite[Theorem 4.3.2]{WY}.
\begin{theorem}\label{p:main} \textup{(White-Yau)} Suppose that $\P\in\Op_C(\sMod_{R})$ is $\Sigma$-cofibrant.  Then the functor $$\sN_\P : \Alg_\P(\sMod_{R}) \lrar \Alg_{\sN\P}(\C_{\geqslant0}(R))$$  is the right adjoint of a Quillen equivalence.
\end{theorem}  

\begin{remark} If $R$ is any commutative ring and the $\Sigma$-cofibrancy condition on $\P$ holds, the categories $\Alg_\P(\sMod_{R})$ and $\Alg_{\sN\P}(\C_{\geqslant0}(R))$ are equipped with a transferred semi-model structure (see, eg, \cite{Fresse1, Spitzweck}). In this case, the statement above remains valid when interpreted within the framework of semi-model categories. The same holds for Corollaries \ref{co:dkbimod} and \ref{ex:tmain} below. 
\end{remark}

\begin{corollary}\label{co:dkbimod} Suppose that $\P\in\Op_C(\sMod_{R})$ is $\Sigma$-cofibrant. Then the functor 
	$$ \sN^{\sB}_\P : \BMod(\P) \lrar \BMod(\sN\P) $$
	defined by applying the normalization functor levelwise is the right adjoint of a Quillen equivalence.
	\begin{proof} According to Proposition \ref{p:BP},  there is a categorical isomorphism $$\Alg_{\textbf{B}^\P}(\sMod_{R}) \cong \BMod(\P).$$ 
	Additionally, $\textbf{B}^\P$ is $\Sigma$-cofibrant due to that property of $\P$. Applying Theorem \ref{p:main} to $\textbf{B}^\P$ gives us a right Quillen equivalence:
		$$ \Alg_{\textbf{B}^\P}(\sMod_{R}) \lrarsimeq \Alg_{\sN\textbf{B}^\P}(\C_{\geqslant0}(R)).$$ 
		On other hand, the shuffle map $\nabla$ induces a map $\nabla^{*} : \textbf{B}^{\sN\P} \lrar \sN\textbf{B}^\P$ of operads in $\C_{\geqslant0}(R)$. Clearly $\nabla^{*}$ is a weak equivalence between $\Sigma$-cofibrant operads. Due to \cite[Theorem 12.5.A]{Fresse1}, we obtain another right  Quillen equivalence:
		$$ \Alg_{\sN\textbf{B}^\P}(\C_{\geqslant0}(R)) \lrarsimeq \Alg_{\textbf{B}^{\sN\P}}(\C_{\geqslant0}(R)).$$
		We complete the proof by observing that the composition of the two right Quillen equivalences obtained above coincides with the functor $\sN^{\sB}_\P$.	
	\end{proof}
\end{corollary}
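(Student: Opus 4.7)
The plan is to realize $\P$-bimodules as algebras over a suitable auxiliary operad, and then to reduce the statement to Theorem~\ref{p:doldkanalg} already established in the excerpt. More precisely, there is a standard construction (recalled in the author's earlier work \cite{Hoang}) assigning to each operad $\P$ a new operad $\textbf{B}^\P$ whose algebras are exactly the $\P$-bimodules, i.e.\ $\Alg_{\textbf{B}^\P}(\sMod_{\textbf{k}}) \cong \BMod(\P)$. A key input I would verify first is that $\textbf{B}^\P$ inherits $\Sigma$-cofibrancy from $\P$, since its operadic components are built out of tensor products of components of $\P$ with the monoidal unit and suitable permutation data, and $\Sigma$-cofibrancy is preserved under these operations.

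Once $\textbf{B}^\P$ is in hand, Theorem~\ref{p:doldkanalg} applied to the operad $\textbf{B}^\P$ yields directly a Quillen equivalence
$$ \Alg_{\textbf{B}^\P}(\sMod_{\textbf{k}}) \;\lrarsimeq\; \Alg_{\sN \textbf{B}^\P}(\C_{\geqslant0}(\textbf{k})), $$
whose right adjoint is levelwise normalization. The left-hand side is $\BMod(\P)$; the issue is that the right-hand side is $\sN\textbf{B}^\P$-algebras, whereas the target we want is $\BMod(\sN\P) \cong \Alg_{\textbf{B}^{\sN\P}}(\C_{\geqslant0}(\textbf{k}))$. So the main step will be to compare the operads $\sN \textbf{B}^\P$ and $\textbf{B}^{\sN\P}$.

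For this comparison, I would use the shuffle map $\nabla$, which is lax symmetric monoidal and hence induces a natural map of operads $\nabla^{*} : \textbf{B}^{\sN\P} \lrar \sN \textbf{B}^\P$. Since $\nabla$ is a levelwise weak equivalence of chain complexes, $\nabla^{*}$ is a levelwise weak equivalence; and both source and target remain $\Sigma$-cofibrant. By invariance of algebras under weak equivalences of $\Sigma$-cofibrant operads (Fresse \cite{Fresse1}, Theorem 12.5.A), the restriction-extension adjunction along $\nabla^{*}$ is a Quillen equivalence $\Alg_{\sN\textbf{B}^\P}(\C_{\geqslant0}(\textbf{k})) \lrarsimeq \Alg_{\textbf{B}^{\sN\P}}(\C_{\geqslant0}(\textbf{k}))$.

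Composing these two Quillen equivalences yields the desired equivalence $\BMod(\P) \lrarsimeq \BMod(\sN\P)$, and it remains to identify the right adjoint with $\sN^{b}_\P$, which follows because both the normalization functor $\sN_{\textbf{B}^\P}$ and restriction along $\nabla^{*}$ act compatibly with the levelwise normalization under the identifications $\Alg_{\textbf{B}^\P}(\sMod_{\textbf{k}}) \cong \BMod(\P)$ and $\Alg_{\textbf{B}^{\sN\P}}(\C_{\geqslant0}(\textbf{k})) \cong \BMod(\sN\P)$. The main obstacle I anticipate is the bookkeeping: verifying that $\textbf{B}^\P$ is $\Sigma$-cofibrant and that the composite right adjoint genuinely coincides with the levelwise normalization on bimodules; once these are in place, the rest is a routine reduction to the algebra-level statement.
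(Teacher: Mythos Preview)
Your proposal is correct and follows essentially the same approach as the paper: encode bimodules as algebras over the auxiliary operad $\textbf{B}^\P$, apply Theorem~\ref{p:doldkanalg}, and then compare $\sN\textbf{B}^\P$ with $\textbf{B}^{\sN\P}$ via the shuffle map using Fresse's invariance theorem. The paper's proof is the same argument, with the same citations and the same final identification of the composite right adjoint with $\sN^{b}_\P$.
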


\begin{corollary}\label{ex:tmain} The normalization functor induces a right Quillen equivalence
	\begin{equation}\label{eq:opnormalized1}
		\sN_C : \Op_C(\sMod_{R}) \lrarsimeq \Op_C(\C_{\geqslant0}(R))
	\end{equation}
	between $C$-colored operads.	
	\begin{proof} Consider the operad $S^C$ that encodes the category of $C$-colored operads in some symmetric monoidal category (see Remark \ref{r:encompass}). Note that $S^C$ comes from an operad in $\Sets$, and is equipped with a free action by the symmetric groups. In particular, $S^C$ is $\Sigma$-cofibrant when considered in any base category as long as the monoidal unit is cofibrant. Applying Theorem \ref{p:main} to this operad, we obtain a right Quillen equivalence:
		$$ \sN_{S^C} : \Alg_{S^C}(\sMod_{R}) \lrarsimeq \Alg_{\sN S^C}(\C_{\geqslant0}(R)). $$
		Now, since $S^C$ is discrete, it implies that $\sN S^C$ coincides with $S^C$ itself considered as an operad in $\C_{\geqslant0}(R)$. So the two functors $\sN_C$ and $\sN_{S^C}$ are isomorphic. The proof is therefore completed.
	\end{proof}	
\end{corollary}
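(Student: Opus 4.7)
The plan is to reduce the statement to Theorem \ref{p:doldkanalg} (the operadic Dold--Kan correspondence for algebras) by expressing the category $\Op_C(-)$ as a category of algebras over a suitable operad. By Remark \ref{r:encompass}, there is an enriched operad $\textbf{O}_C$ (coming in fact from an operad in $\Sets$) such that, for any symmetric monoidal base $\cS$ with a cofibrant unit, the category of $\textbf{O}_C$-algebras in $\cS$ is equivalent to $\Op_C(\cS)$. I would first apply this observation simultaneously to $\cS = \sMod_{\textbf{k}}$ and $\cS = \C_{\geqslant 0}(\textbf{k})$, giving identifications
\begin{equation*}
\Alg_{\textbf{O}_C}(\sMod_{\textbf{k}}) \cong \Op_C(\sMod_{\textbf{k}}), \qquad \Alg_{\textbf{O}_C}(\C_{\geqslant 0}(\textbf{k})) \cong \Op_C(\C_{\geqslant 0}(\textbf{k})).
\end{equation*}

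Next I would verify the input hypothesis of Theorem~\ref{p:doldkanalg}, namely that $\textbf{O}_C$ is $\Sigma$-cofibrant when regarded as an operad in $\sMod_{\textbf{k}}$. Since $\textbf{O}_C$ is built from an operad in $\Sets$, each of its levels is a free $\textbf{k}$-module on a set equipped with a free $\Sigma_n$-action (the automorphism groups of trees act freely on the corresponding basis of tree diagrams). Hence each level is projectively $\Sigma_n$-cofibrant, and $\textbf{O}_C$ is $\Sigma$-cofibrant as required. Theorem~\ref{p:doldkanalg} then supplies a Quillen equivalence
\begin{equation*}
\L_{\textbf{O}_C} : \Alg_{\sN\textbf{O}_C}(\C_{\geqslant 0}(\textbf{k})) \;\longrightarrow\; \Alg_{\textbf{O}_C}(\sMod_{\textbf{k}}) \,:\, \sN_{\textbf{O}_C}
\end{equation*}
whose right adjoint is levelwise application of $\sN$.

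To finish, I would identify $\sN\textbf{O}_C$ with $\textbf{O}_C$ viewed as an operad in $\C_{\geqslant 0}(\textbf{k})$. Because $\textbf{O}_C$ is discrete (a coproduct of monoidal units indexed by a set at each level), the normalization functor sends each $\textbf{O}_C(c_1,\ldots,c_n;c)$ to the same underlying $\textbf{k}$-module concentrated in degree zero; moreover the lax monoidal structure map $\nabla$ is the identity on degree-zero summands, so the operad structure is preserved strictly. Thus $\sN\textbf{O}_C \cong \textbf{O}_C$ in $\Op_C(\C_{\geqslant 0}(\textbf{k}))$, and under the equivalences above the right adjoint $\sN_{\textbf{O}_C}$ is precisely $\sN_C$.

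The only delicate step is the last identification $\sN\textbf{O}_C \cong \textbf{O}_C$ together with checking that, through the equivalences $\Alg_{\textbf{O}_C} \simeq \Op_C$ on both sides, the functor $\sN_{\textbf{O}_C}$ really corresponds to levelwise normalization of operads; this is essentially bookkeeping, since both functors forget to collections and then apply $\sN$ componentwise, and the operadic structure maps agree because the lax monoidal structure on $\sN$ is used in exactly the same way in both descriptions.
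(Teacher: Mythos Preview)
Your proposal is correct and follows essentially the same approach as the paper's own proof: both reduce to Theorem~\ref{p:doldkanalg} via the operad $\textbf{O}_C$ of Remark~\ref{r:encompass}, check $\Sigma$-cofibrancy from the free symmetric action on the underlying $\Sets$-operad, and use discreteness to identify $\sN\textbf{O}_C$ with $\textbf{O}_C$. Your version simply spells out in slightly more detail the bookkeeping that the paper leaves implicit.
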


\begin{remark}\label{r:opbivar} We now describe a variant of Theorem \ref{p:main}. Let $\rL_C$ denote the left adjoint to the functor $\sN_C$. For a cofibrant operad $\O \in\Op_C(\C_{\geqslant0}(R))$, the unit map $\eta_\O : \O \lrar \sN_C\rL_C(\O)$ is a weak equivalence (between $\Sigma$-cofibrant operads) by the Quillen equivalence $\rL_C \dashv \sN_C$ and the fact that the normalization functor creates weak equivalences. We hence obtain a composed Quillen equivalence $$\adjunction*{\simeq}{\Alg_\O(\C_{\geqslant0}(R))}{\Alg_{\sN_C\rL_C(\O)}(\C_{\geqslant0}(R))}{} \adjunction*{\simeq}{}{\Alg_{\rL_C\O}(\sMod_{R})}{} $$
in which the first adjunction is induced by $\eta_\O$ and the second is given by applying Theorem \ref{p:main} to the operad $\rL_C\O \in  \Op_C(\sMod_{R})$. The same argument can be applied to obtain a Quillen equivalence of the form
\begin{equation}\label{eq:opbivar}
	\adjunction*{\simeq}{\BMod(\O)}{\BMod(\rL_C\O)}{}
\end{equation}
in which the right adjoint is given by applying the normalization functor levelwise.
\end{remark}

\section{Main statements}\label{s:mapping}

In this section, we will let $\cS$ be a symmetric monoidal category and let $\P$ be a $C$-colored operad in $\cS$ with $C$ being a fixed set of colors.  To avoid confusion, we will write
  $$ \sqcup \; , \; \bigsqcup \; \text{and} \; \coprod $$
to denote the coproduct operations (and also, pushouts) in the categories $\BMod(\mathcal{P})$, $\Op_C(\mathcal{S})$ and $\Op(\mathcal{S})$, respectively.

\subsection{The first fiber sequence}\label{s:mappingC}

First, let us consider the case where the set of colors is fixed. 

\smallskip

We are recalling a hypothesis introduced by Dwyer-Hess, that has played a very active role in their work \cite{Hess}. Recall by definition that a \textbf{pointed $\P$-bimodule} is a $\P$-bimodule $M$ endowed with a map $\I_C \lrar M$ of $C$-collections. We will let $\BMod(\mathcal{P})^{*}$ denote the category of such objects with morphisms being the basepoint-preserving maps.

\begin{remark} Note that $\mathcal{P}$ itself is an object of $\BMod(\mathcal{P})^{*}$ with the base point given by the unit of $\P$. Notice also that $\BMod(\mathcal{P})^{*}$ agrees with the under-category $\BMod(\mathcal{P})_{\mathcal{P}\circ \mathcal{P}/}$ where $\mathcal{P}\circ \mathcal{P}$ represents the free $\P$-bimodule generated by $\mathcal{I}_C$.
\end{remark}
Let $f + g : \mathcal{P}\bigsqcup\mathcal{P} \lrar \Q$ be a map in $\Op_C(\mathcal{S})$. Then $\Q$ inherits a $\P$-bimodule structure with the left (resp. right) $\P$-action induced by $f$ (resp. $g$).  From this, we obtain a restriction functor $$\Op_C(\mathcal{S})_{\mathcal{P}\bigsqcup\mathcal{P}/} \lrar \BMod(\mathcal{P})^{*} ,$$ which admits a left adjoint, denoted by $\E$.  Observe that the functor $\E$ sends $\P$ to itself $\P\in \Op_C(\mathcal{S})_{\mathcal{P}\bigsqcup\mathcal{P}/}$ equipped with the fold map $\Id+\Id : \mathcal{P}\bigsqcup\mathcal{P} \lrar \P$. 

\smallskip

We further suppose that the base category $\cS$ is admissible as defined in $\S$\ref{s:22}.

\begin{definition}\label{S8} (\textbf{Stability hypothesis})
	The operad $\P\in \Op_C(\mathcal{S})$ is said to be \textbf{intrinsically stable} if the left derived  functor  $\LL\E:\BMod(\mathcal{P})^{*}\lrar \Op_C(\mathcal{S})_{\mathcal{P}\bigsqcup\mathcal{P}/}$ sends $\P$ to itself $\mathcal{P}$. Moreover, we say that the base category $\cS$ is \textbf{adequate} if every cofibrant operad in $\cS$ is intrinsically stable.
\end{definition} 

As we will see below, the base categories $\Set_\Delta$, $\sMod_{\textbf{k}}$ and $\C_{\geqslant0}(R)$ (cf Example \ref{ex:basis}) are all adequate, for any commutative ring $\textbf{k}$ and any commutative ring $R$ containing the field $\QQ$.

\begin{remark}\label{r:goodness} Considering operads as monoid objects in the monoidal category of $C$-collections, we can see that the \textit{stability hypothesis} is a particular case of what Dwyer and Hess refer to as the stability of \textit{distinguished objects} with respect to the functor $\E$ (cf \cite[$\S$3.4]{Hess}).
\end{remark}

We give the following statement as an illustration of how that hypothesis can be applied in practice.  

\begin{theorem}\label{t:goodness} Suppose that the base category $\cS$ is admissible, that $\P\in \Op_C(\mathcal{S})$ is cofibrant and intrinsically stable, and suppose given a map $f : \P \lrar \O$ in $\Op_C(\mathcal{S})$. Then there is a fiber sequence of spaces
	\begin{equation}\label{eq:goodnessmain}
		\Omega\Map^{\h}_{\Op_C(\mathcal{S})}(\P, \O) \lrar \Map^{\h}_{\BMod(\P)}(\P, \O) \lrar \underset{c\in C}{\prod}\Map^{\h}_{\cS}(1_{\cS}, \O(c;c))
	\end{equation}
in which the loop space $\Omega\Map^{\h}_{\Op_C(\mathcal{S})}(\P, \O)$ is taken at the base point $f$, and $\O$ is regarded as a $\P$-bimodule with the structure induced by $f$.
\end{theorem}

\begin{remark} This theorem is in fact a version of \cite[Theorem 3.11]{Hess}. Nonetheless, we shall provide an alternative proof that is somewhat more direct.
\end{remark}

\begin{proof}[\underline{Proof of Theorem~\ref{t:goodness}}] First, observe that there is a commutative square of left Quillen functors
	\begin{equation}\label{eq:goodness}
		 \xymatrix{
			\BMod(\mathcal{P})_{\mathcal{P}\circ \mathcal{P}/} \ar^-{\mu_!}[r]\ar_-{\E}[d] & \BMod(\mathcal{P})_{\P/} \ar^-{\ovl{\E}}[d] \\
			\Op_C(\mathcal{S})_{\mathcal{P}\bigsqcup\mathcal{P}/} \ar_-{(\Id+\Id)_!}[r] & \Op_C(\mathcal{S})_{\P/} \\
		}	
	\end{equation}
where $\ovl{\E}$ denotes the left adjoint to the corresponding restriction functor, the functor $\mu_!$ is induced by the composition $\mu : \mathcal{P}\circ \mathcal{P} \lrar \P$ regarded as a map in  $\BMod(\mathcal{P})$, and $(\Id+\Id)_!$ refers to the functor induced by the fold map $\mathcal{P}\bigsqcup\mathcal{P} \lrar \P$. To verify the commutativity, it suffices to observe that the associated square of right adjoints is commutative. 

\smallskip

Let us take a factorization $\mathcal{P}\circ \mathcal{P} \x{\varphi}{\lrar} \P^{\sB} \lrarsimeq \mathcal{P}$ in $\BMod(\mathcal{P})$ of $\mu$ into a cofibration followed by a weak equivalence. In particular, $\varphi$ exhibits $\P^{\sB}$ as a cofibrant model for $\P\in\BMod(\mathcal{P})^{*}$. Since $\P$ is intrinsically stable and cofibrant, it implies that the derived image of $\P^{\sB}\in\BMod(\mathcal{P})^{*}$ through the composed functor $(\Id+\Id)_!\circ\E$ is a model for  $\Sigma(\mathcal{P}\bigsqcup\mathcal{P})$, ie, the homotopy pushout of the fold map $\mathcal{P}\bigsqcup\mathcal{P} \lrar \P$ with itself. In other direction, the derived image of $\P^{\sB}$ under $\mu_!$ is the pushout $\P  \underset{\P\circ\P}{\sqcup}  \P^{\sB} =: B_\P$. Combined with the commutativity of \eqref{eq:goodness}, these observations prove that the derived image of $B_\P \in \BMod(\mathcal{P})_{\P/}$ through $\ovl{\E}$ is given by $\Sigma(\mathcal{P}\bigsqcup\mathcal{P}) \in \Op_C(\mathcal{S})_{\P/}$.

\smallskip

Let us now consider the following coCartesian square in $\BMod(\mathcal{P})$:
	\begin{equation}\label{eq:goodness1}
	\xymatrix{
		\P \sqcup (\P\circ\P)  \ar^{ \; \; \Id\sqcup\varphi}[r]\ar_{\Id+\mu}[d] & \P \sqcup \P^{\sB} \ar[d] \\
		\P \ar[r] & B_\P. \\
	}\end{equation}
Note that, when considered as a coCartesian square in $\BMod(\mathcal{P})_{\P/}$, this square is in fact homotopy coCartesian due to the fact that $\varphi : \P\circ\P \lrar \P^{\sB}$ is a cofibration between cofibrant $\P$-bimodules. Consider $\O$ as an object of $\BMod(\mathcal{P})_{\P/}$. The square \eqref{eq:goodness1} yields a fiber sequence:
$$  \Map^{\h}_{\BMod(\mathcal{P})_{\P/}}(B_\P, \O) \lrar \Map^{\h}_{\BMod(\mathcal{P})_{\P/}}(\P \sqcup \P^{\sB}, \O) \lrar \Map^{\h}_{\BMod(\mathcal{P})_{\P/}}(\P \sqcup (\P\circ\P), \O).$$
We shall complete the proof by showing each of these three spaces, in the order given, is naturally weakly equivalent to the respective space in the sequence \eqref{eq:goodnessmain}. First, argue that there exists a chain of weak equivalences:
$$  \Map^{\h}_{\BMod(\mathcal{P})_{\P/}}(B_\P, \O) \simeq \Map^{\h}_{\Op_C(\mathcal{S})_{\P/}}(\Sigma(\mathcal{P}\bigsqcup\mathcal{P}), \O) \simeq \Omega\Map^{\h}_{\Op_C(\mathcal{S})_{\P/}}(\mathcal{P}\bigsqcup\mathcal{P}, \O) .$$
Indeed, the first weak equivalence follows from the second paragraph, while the other is straightforward. Moreover, by adjunction we have a weak equivalence $$\Map^{\h}_{\Op_C(\mathcal{S})_{\P/}}(\mathcal{P}\bigsqcup\mathcal{P}, \O)\simeq \Map^{\h}_{\Op_C(\mathcal{S})}(\mathcal{P}, \O).$$ 
We thus obtain a weak equivalence $\Map^{\h}_{\BMod(\mathcal{P})_{\P/}}(B_\P, \O) \simeq \Omega\Map^{\h}_{\Op_C(\mathcal{S})}(\mathcal{P}, \O)$. By adjunction again, we obtain the following chains of weak equivalences:
$$ \Map^{\h}_{\BMod(\mathcal{P})_{\P/}}(\P \sqcup \P^{\sB}, \O) \simeq \Map^{\h}_{\BMod(\mathcal{P})}(\P^{\sB}, \O) \simeq \Map^{\h}_{\BMod(\mathcal{P})}(\P, \O), \; \text{and}$$
$$ \Map^{\h}_{\BMod(\mathcal{P})_{\P/}}(\P \sqcup (\P\circ\P), \O) \simeq \Map^{\h}_{\BMod(\mathcal{P})}(\P\circ\P, \O) \simeq \Map^{\h}_{\Coll_C(\cS)}(\I_C, \O).$$
Finally, notice that the latter space is weakly equivalent to $\underset{c\in C}{\prod}\Map^{\h}_{\cS}(1_{\cS}, \O(c;c))$.
\end{proof}

\begin{remark} According to the proof above, the map 
	$$ \Map^{\h}_{\BMod(\P)}(\P, \O) \lrar \underset{c\in C}{\prod}\Map^{\h}_{\cS}(1_{\cS}, \O(c;c)) $$ 
is weakly equivalent to the map $\Map^{\h}_{\BMod(\P)}(\P, \O) \lrar \Map^{\h}_{\BMod(\P)}(\P\circ\P, \O)$ that is induced by the composition $\mu : \P\circ\P \lrar \P$.
\end{remark}

The following explains the significance of \textit{adequate base categories}.
\begin{lemma}\label{l:goodbase} Suppose further that the model structure on $\cS$ is cofibrantly generated, that the monoidal unit $1_\cS$ is cofibrant, and that $\cS$ is adequate in the sense of Definition \ref{S8}. Then the statement of Theorem \ref{t:goodness} already holds true for every $\Sigma$-cofibrant operad in $\cS$.	
	\begin{proof} Let $\P \in \Op_C(\mathcal{S})$ be a $\Sigma$-cofibrant operad and let $f:\Q \lrarsimeq \P$ be a cofibrant resolution for $\P$. In particular, the operad $\Q$ is intrinsically stable by assumption. We deduce the lemma by observing that the adjunction $\adjunction*{}{\BMod(\Q)}{\BMod(\P)}{}$ induced by $f$ is a Quillen equivalence (see \cite[$\S$12.5]{Fresse1} and Proposition \ref{p:BP}).
	\end{proof}
\end{lemma}

We will now discuss the verification of the stability hypothesis in practice. It was proved by Dwyer-Hess that every \textbf{non-symmetric simplicial operad} is intrinsically stable in our sense (cf  \cite[Proposition 5.4]{Hess}). Expanding upon their result, we have obtained the following in \cite[$\S$5.1]{Hoang}. 
\begin{proposition}\label{p:goodbase} Suppose that $\cS$ is either $\Set_\Delta$ or $\sMod_{\textbf{k}}$ with $\textbf{k}$ being any commutative ring. Then $\cS$ is adequate, ie, every  cofibrant (symmetric) operad in $\cS$ is intrinsically stable.
\end{proposition}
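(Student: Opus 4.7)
The goal is to show that for any cofibrant $\P \in \Op_C(\cS)$, the derived unit $\mathbb{L}\E(\P) \lrar \P$ in $\Op_C(\cS)_{\P\sqcup\P/}$ is a weak equivalence. The plan is to exhibit an explicit cofibrant replacement $\varphi : \P^b \lrarsimeq \P$ in $\BMod(\P)^*$ and then compute $\E(\P^b)$ directly. By Remark \ref{r:semi}, it suffices to treat cellular $\P$, which moreover is $\Sigma$-cofibrant by Theorem \ref{t:semi}; this is the property that will make the relevant colimits behave homotopically.

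First, I would construct $\P^b$ as a two-sided bar-style resolution in $\BMod(\P)^*$, namely the geometric realization of the simplicial $\P$-bimodule $[n] \mapsto \P \circ \P^{\circ n} \circ \P$ (or the analogous $W$-construction in the non-simplicial setting of $\sMod_{\textbf{k}}$), with the basepoint induced by iterating the unit of $\P$. A Reedy-type argument shows this is a cofibrant object of $\BMod(\P)^*$ together with a natural acyclic map $\varphi : \P^b \lrarsimeq \P$. Next, I would compute $\E$ on a free pointed $\P$-bimodule of the form $\P \circ X \circ \P$: unfolding the adjunction identifies $\E(\P \circ X \circ \P)$ with the operadic free extension of $\P$ by the $C$-collection $X$ via its pointing. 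From this I would run a tree-indexed filtration of $\E(\P^b)$ analogous to the one used in the proof of Lemma \ref{l:doldkanalg}, and identify the associated graded with iterated compositions inside $\P$ that collapse, under the operadic structure maps, to a model of $\P$ in $\Op_C(\cS)_{\P\sqcup\P/}$.

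The main obstacle will be the treatment of the symmetric group actions on the trees indexing the filtration. In the non-symmetric setting of Dwyer-Hess \cite{Hess}, trees are planar and the analogous colimits are tautologically homotopy colimits. In the symmetric setting one must exploit the $\Sigma$-cofibrancy of $\P$ (and of each $\P^{\circ n}$, by an argument along the lines of \cite{Pavlov}, Lemma 6.1(ii), as already used in Lemma \ref{l:doldkanalg}) in order to ensure that the tree-indexed coproducts $\bigsqcup_{[T]} (-)_T \otimes_{\Aut(T)} \Sigma_{\{c_1,\cdots,c_n\}}$ compute the correct homotopy type. Combined with the simplicial contracting homotopy built into the bar resolution, this yields $\E(\P^b) \simeq \P$ in $\Op_C(\cS)_{\P\sqcup\P/}$.

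For the two specific base categories: when $\cS = \Set_\Delta$, this amounts to running the Dwyer-Hess strategy with the additional $\Sigma$-equivariance checked using that cofibrant symmetric simplicial operads are levelwise free as $\Sigma$-diagrams. When $\cS = \sMod_{\textbf{k}}$, one can either repeat the cellular/tree argument verbatim, since the required $\Sigma$-cofibrancy still ensures that quotients by $\Aut(T)$ are homotopy quotients, or transfer the conclusion from $\Set_\Delta$ along the free $\textbf{k}$-module adjunction $\textbf{k}[-] : \Set_\Delta \rightleftarrows \sMod_{\textbf{k}}$, which is strong symmetric monoidal and left Quillen and therefore compatible with both $\E$ and the bar construction.
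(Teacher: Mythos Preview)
The paper does not actually prove this proposition here: the sentence immediately preceding it reads ``Expanding upon their result, we have obtained the following in [\cite{Hoang}, \S5.1],'' and no argument is given in the present paper. So there is no in-paper proof to compare against; the result is imported from the author's earlier work.

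That said, your sketch is a reasonable outline of the kind of argument one expects to find in the cited reference, since it is precisely the symmetric-operad upgrade of the Dwyer--Hess strategy from \cite{Hess}. A few points deserve more care. First, the assertion that ``cofibrant symmetric simplicial operads are levelwise free as $\Sigma$-diagrams'' is not literally correct: $\Sigma$-cofibrancy means projective cofibrancy as a $C$-collection, which for $\Set_\Delta$ amounts to the $\Sigma_n$-action being free only up to retract; you should phrase the argument in terms of $\Sigma$-cofibrancy throughout rather than freeness. Second, the transfer along $\textbf{k}[-] : \Set_\Delta \rightleftarrows \sMod_{\textbf{k}}$ does not immediately cover all cofibrant operads in $\sMod_{\textbf{k}}$, since not every such operad lies in the essential image of $\textbf{k}[-]$; you would need an intermediate step replacing a given cofibrant $\P$ by one of the form $\textbf{k}[\Q]$ and then invoking the invariance of the goodness condition under levelwise weak equivalences between $\Sigma$-cofibrant operads (in the spirit of Lemma~\ref{l:goodbase}). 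Third, the passage ``the simplicial contracting homotopy built into the bar resolution \dots\ yields $\E(\P^b)\simeq\P$'' hides the real content: one must check that $\E$ commutes with the geometric realization up to weak equivalence, which is exactly where the $\Sigma$-cofibrancy and the tree filtration are doing work. None of these are fatal, but each requires a genuine argument rather than a one-line remark.
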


Here is an application of the operadic Dold-Kan correspondence.
\begin{proposition}\label{p:goodbase1} Let $R$ be a commutative ring containing the field $\QQ$. Then the base category $\C_{\geqslant0}(R)$ is adequate.
		\begin{proof} Let $\O\in \Op_{C}(\C_{\geqslant0}(R))$ be a cofibrant operad. Consider the following square of left Quillen functors
		$$  \xymatrix{
			\BMod(\O)^* \ar[r]\ar_-{\E}[d] & \BMod(\rL_C\O)^* \ar^-{\E}[d] \\
			\Op_C(\C_{\geqslant0}(R))_{\O\bigsqcup\O/} \ar[r] & \Op_C(\sMod_{R})_{\rL_C(\O\bigsqcup\O)/} \\
		}	 $$
where the top horizontal functor is the pointed-analogue of the left Quillen functor $\BMod(\O) \lrarsimeq \BMod(\rL_C\O)$, while  the bottom horizontal functor is induced by $\rL_C : \Op_C(\C_{\geqslant0}(R)) \lrarsimeq \Op_C(\sMod_{R})$ (see Remark \ref{r:opbivar}). One can show that this square is commutative (again by verifying the commutativity of the associated square of right adjoints). We now claim that $\O$ is intrinsically stable due to the fact that $\rL_C\O$ is intrinsically stable by Proposition \ref{p:goodbase}, and that both the horizontal functors are a left Quillen equivalence identifying $\O$ to $\rL_C\O$.
	\end{proof}
\end{proposition}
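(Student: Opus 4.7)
The plan is to transfer goodness from $\sMod_{\textbf{k}}$, where it was established in Proposition \ref{p:goodbase}, to $\C_{\geqslant0}(\textbf{k})$ using the operadic Dold-Kan correspondence. Given a cofibrant $\P \in \Op_C(\C_{\geqslant0}(\textbf{k}))$, I would first observe that $\L_C\P \in \Op_C(\sMod_{\textbf{k}})$ is cofibrant (as the image of a cofibrant object under a left Quillen functor), hence good by Proposition \ref{p:goodbase}; the task is then to propagate this property back to $\P$ itself.

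The key step is to assemble a commutative square of left Quillen functors
$$
\xymatrix{
	\BMod(\P)^{*} \ar^{\simeq}[r]\ar_{\E}[d] & \BMod(\L_C\P)^{*} \ar^{\E}[d] \\
	\Op_C(\C_{\geqslant0}(\textbf{k}))_{\P\bigsqcup\P/} \ar_{\simeq}[r] & \Op_C(\sMod_{\textbf{k}})_{\L_C(\P\bigsqcup\P)/} \\
}
$$
in which the horizontal arrows are the left Quillen equivalences induced by $\L_C$ (the top being the pointed version of the bimodule equivalence from Corollary \ref{co:dkbimod1}, the bottom being the coslice version of the operad equivalence from Corollary \ref{ex:tmain}), and the vertical $\E$'s are the left adjoints of Definition \ref{S8}. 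Commutativity is most conveniently checked after passing to right adjoints: both composite right-adjoint functors restrict a morphism $\L_C\P \bigsqcup \L_C\P \to \Q$ in $\Op_C(\sMod_{\textbf{k}})$ to the underlying pointed $\P$-bimodule structure on $\sN_C\Q$, and so coincide up to canonical isomorphism.

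With the square in hand, the conclusion is essentially formal. The upper-right route sends $\P \in \BMod(\P)^{*}$ (with base point the unit $\I_C \to \P$) first to $\L_C\P \in \BMod(\L_C\P)^{*}$ and then to $\mathbb{L}\E(\L_C\P) \simeq \L_C\P$ in the homotopy category of $\Op_C(\sMod_{\textbf{k}})_{\L_C(\P\bigsqcup\P)/}$, using goodness of $\L_C\P$. The lower-left route sends $\P$ to $\mathbb{L}\E(\P)$ and then to its image under the bottom horizontal equivalence. Since that bottom functor is a Quillen equivalence, reflecting weak equivalences between cofibrant objects, the two outputs must be identified, which forces $\mathbb{L}\E(\P) \simeq \P$ in $\Op_C(\C_{\geqslant0}(\textbf{k}))_{\P\bigsqcup\P/}$, i.e.\ goodness of $\P$.

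The main obstacle will be the careful bookkeeping around the square of Quillen functors, in particular tracking the basepoints and under-category structures through both the operadic and the bimodule Dold-Kan equivalences. Once one unpacks that the basepoint $\I_C \to \P$ is carried naturally to $\I_C \to \L_C\P$ under the unit-preserving adjunction, and that the enveloping adjunction $\E \dashv \text{restriction}$ is natural in morphisms of base operads, the whole diagram is manufactured from formal properties of adjoints and the rest is a direct transport of structure.
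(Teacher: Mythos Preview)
Your proposal is correct and follows essentially the same approach as the paper's own proof: both build the identical commutative square of left Quillen functors, verify its commutativity by passing to right adjoints, and then transfer goodness of $\L_C\P$ (from Proposition~\ref{p:goodbase}) back to $\P$ via the horizontal Quillen equivalences. Your write-up is more explicit about the formal transport step and the basepoint bookkeeping, but the argument is the same.
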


Combining the above statements, we obtain the following.
\begin{corollary}\label{t:sum} Suppose that $\cS$ is any of the categories in $\{\Set_\Delta, \sMod_{\textbf{k}}, \C_{\geqslant0}(R)\}$ where $\textbf{k}$ and $R$ are commutative rings, with $R$ containing $\QQ$. Let $\P\in\Op_C(\mathcal{S})$ be a $\Sigma$-cofibrant operad and suppose given a map $\P\lrar\O$ between $C$-colored operads. Then there is a fiber sequence of spaces
	\begin{equation}\label{eq:goodnessmain1}
		\Omega\Map^{\h}_{\Op_C(\mathcal{S})}(\P, \O) \lrar \Map^{\h}_{\BMod(\P)}(\P, \O) \lrar \underset{c\in C}{\prod}\Map^{\h}_{\cS}(1_{\cS}, \O(c;c)).
	\end{equation}
\end{corollary}

\begin{example} In the case where $\cS=\Set_\Delta$, the above assertion extends a result of Ducoulombier \cite{Julien}, assuming that the operads are single-colored and that $\O(1) \simeq *$. More precisely, in this situation we obtain a weak equivalence
		\begin{equation}\label{eq:sum1}
 \Omega\Map^{\h}_{\Op_*(\Set_\Delta)}(\P, \O) \lrarsimeq \Map^{\h}_{\BMod(\P)}(\P, \O)
	\end{equation}
where $\Op_*(\Set_\Delta)$ refers to the category of  single-colored simplicial operads. 
\end{example}

\begin{remark} For the case $\cS=\C_{\geqslant0}(R)$ (or $\sMod_{\textbf{k}}$), an analogue of \eqref{eq:sum1} does not hold for nontrivial choices of the operad $\O$. Instead, in several cases of interest where $\O(1) \simeq R$, we have a fiber sequence of the form
	$$ \Omega\Map^{\h}_{\Op_*(\C_{\geqslant0}(R))}(\P, \O) \lrar \Map^{\h}_{\BMod(\P)}(\P, \O) \lrar R.$$
\end{remark}

\begin{example}\label{ex:end} Let $\cS$ and $\P$ be as in the statement of Corollary \ref{t:sum}.   For an ordered pair $(A,B)$ of objects in $\cS^{\times C}$, one can associate to it an \textbf{endomorphism $C$-collection}, denoted $\End_{A,B}$, and defined on each level by letting $$\End_{A,B}(c_1,\cdots,c_n;c) := \Map_{\cS}(A(c_1)\otimes\cdots\otimes A(c_n),B(c)).$$ 
In particular, the \textbf{endomorphism operad} associated to $A$ is given by $\End_{A}:=\End_{A,A}$. Let us assume that $A$ comes equipped with a $\P$-algebra structure classified by a map $\mu_A : \P\lrar\End_A$ of  operads. Then we have an adjunction
	$$ (-)\circ_{\P}A : \adjunction*{}{\BMod(\P)}{\Alg_{\P}(\cS)}{} : \End_{A,-} $$
where the left adjoint is given by the \textbf{relative composite product} over $\P$ (cf, eg, \cite{Fresse1, Rezk} for more details about these constructions).   Clearly the above adjunction becomes a Quillen adjunction as long as $A$ is termwise cofibrant. Now assume further that $A$ is \textit{bifibrant} (ie, both fibrant and cofibrant) as a $\P$-algebra, so that  
we have a weak equivalence $$\Map^{\h}_{\BMod(\P)}(\P, \End_A) \simeq \Map^{\h}_{\Alg_{\P}(\cS)}(A,A)$$
(cf \cite[Theorem 15.2.A]{Fresse1}), and so that the underlying space of $\Map_{\cS}(A(c),A(c))$ is a model for the corresponding derived mapping space. Applying Corollary \ref{t:sum} to the map $\mu_A$ hence gives us a fiber sequence:
\begin{equation}\label{eq:sum2}
\Omega\Map^{\h}_{\Op_C(\mathcal{S})}(\P, \End_A) \lrar \Map^{\h}_{\Alg_{\P}(\cS)}(A,A) \lrar \underset{c\in C}{\prod} \Map^{\h}_{\cS}(A(c),A(c)).
\end{equation}
\end{example}

\begin{example}\label{r:Rezkfiber} We will illustrate how the above example harmonizes naturally with a result obtained by Rezk \cite{Rezk}. For each model category $\textbf{M}$, we denote by $\textbf{M}_\infty^{\simeq}$ the \textbf{maximal $\infty$-groupoid} contained in the underlying $\infty$-category of $\textbf{M}$ (cf \cite{Luriehtt}). Moreover, for an object $X\in\textbf{M}$, we will write $$\Aut^{\h}_{\textbf{M}}(X) \subseteq\Map^{\h}_{\textbf{M}}(X,X)$$ for the subspace consisting of those that are  self-equivalences on $X$. It is known that $\Aut^{\h}_{\textbf{M}}(X)$ is a natural model for the loop space of $\textbf{M}_\infty^{\simeq}$ at $X$ (cf, eg, \cite{DKcor}). Now let $\cS$ and $\P$ be as in the statement of Corollary \ref{t:sum} again. For a termwise bifibrant object $A \in \cS^{\times C}$, according to Theorem 1.2.10 (and 1.2.15) of \cite{Rezk}, there is a fiber sequence
	\begin{equation}\label{eq:sum3}
		\Map^{\h}_{\Op_C(\mathcal{S})}(\P, \End_A) \lrar \Alg_{\P}(\cS)_\infty^{\simeq} \lrar \underset{C}{\prod}\cS_\infty^{\simeq}.
	\end{equation}
	Suppose further that $A$ is a bifibrant $\P$-algebra. Then there is a diagram of homotopy Cartesian squares of the form
	\begin{equation}\label{eq:Rezkfiber}
		\xymatrix{
			\Omega\Map^{\h}_{\Op_C(\mathcal{S})}(\P, \End_A) \ar[r]\ar[d] & \Aut^{\h}_{\Alg_{\P}(\cS)}(A) \ar[r]\ar[d] & \Map^{\h}_{\Alg_{\P}(\cS)}(A,A) \ar[d] \\
			\{\Id_A\} \ar[r] & \underset{c\in C}{\prod}\Aut^{\h}_{\cS}(A(c)) \ar[r] & \underset{c\in C}{\prod} \Map^{\h}_{\cS}(A(c),A(c)) \\
		} 
	\end{equation}
where the left square is induced by looping down the fiber sequence \eqref{eq:sum3} at $A$, and the right homotopy Cartesian square arises due to the fact that the forgetful functor $\Alg_{\P}(\cS) \lrar \cS^{\times C}$ creates weak equivalences.
\end{example}

\smallskip

\subsection{The second fiber sequence}\label{s:mappingen}

As the main interest, we now consider the mapping spaces between operads with non-fixed sets of colors. 

\smallskip

We let $\cS$ be a base category that is sufficient in the sense of Convention \ref{conv:adsu}, and let $\P\in\Op_{C}(\cS)$ be a $C$-colored operad in $\cS$ with $C$ being a fixed set of colors.

\begin{notation}\label{no:core} For an $\cS$-enriched category $\C$, we  denote by $$\cor(\C) := \Map^{\h}_{\Cat(\cS)}([0]_\cS, \C)$$ where $[0]_\cS$ denotes the category with a single object and with the mapping object being given by $1_\cS$, and refer to $\cor(\C)$ as the \textbf{core of $\C$}. 
\end{notation}

\begin{remark} We have that  $\pi_0(\cor(\C))$ represents the \textbf{equivalence classes} of objects in $\C$. Here by convention two objects in $\C$ are equivalent if they are isomorphic as objects in the homotopy category $\Ho(\C)$ (see $\S$\ref{s:22}). 
\end{remark}

\begin{example}\label{ex:coresim} When $\C$ is a fibrant simplicial category, we have a weak equivalence $\cor(\C) \simeq \sN(\C)^{\simeq}$, ie, the maximal $\infty$-groupoid contained in the homotopy coherent nerve of $\C$. To see this, it suffices to observe that 
	$$ \Map^{\h}_{\Cat(\Set_\Delta)}([0]_{\Set_\Delta}, \C) \simeq \Map_{\Cat_\infty}(*, \sN(\C)) $$
where the space on the right is the mapping space in the $\infty$-category of $\infty$-categories from the terminal $\infty$-category $*$ to $\sN(\C)$, which is exactly given by the maximal $\infty$-groupoid contained in $\Fun(*,\sN(\C)) \cong \sN(\C)$. 
\end{example}

\begin{example}\label{ex:coregen} For a commutative ring $\textbf{k}$, we have a Quillen adjunction $$\adjunction*{}{\Cat(\Set_\Delta)}{\Cat(\sMod_{\textbf{k}})}{}$$ induced by the free-forgetful adjunction $\adjunction*{}{\Set_\Delta}{\sMod_{\textbf{k}}}{}$. For a category $\C\in\Cat(\sMod_{\textbf{k}})$, we denote by $\C_\Delta \in \Cat(\Set_\Delta)$ its underlying simplicial category. By adjunction, we obtain a weak equivalence
	$$ \Map^{\h}_{\Cat(\Set_\Delta)}([0]_{\Set_\Delta}, \C_\Delta) \simeq \Map^{\h}_{\Cat(\sMod_{\textbf{k}})}([0]_{\sMod_{\textbf{k}}}, \C).$$
Combining this with the above example, we obtain a chain of weak equivalences $$ \cor(\C) \simeq \cor(\C_\Delta) \simeq \sN(\C_\Delta)^{\simeq}.$$
Moreover, this machinery also applies to the categories in $\C_{\geqslant0}(\textbf{k})$, as a consequence of the \textbf{categorical Dold-Kan correspondence} presented in \cite{Tabuada}.
\end{example}

Now suppose we are given an operad $\Q\in \Op_D(\cS)$ colored in some set $D$, and along with a map $f :  C \lrar D$ between the sets of colors. Recall by notations that $\Q_1 \in \Cat(\cS)$ signifies the underlying category of $\Q$ formed by the collection of unary operations, and $f^*\Q \in \Op_C(\cS)$ denotes the $C$-colored operad determined by the restriction of colors.

\begin{remark} It follows immediately from the definition that $$\Hom_{\Op(\cS)}(\P,\Q) \cong \underset{\alpha}{\coprod} \; \Hom_{\Op_C(\cS)}(\P,\alpha^*\Q)$$ where the coproduct ranges over the set of maps $ C \lrar D$. Nonetheless, when passing to the realm of homotopy theory, the circumstance might vary significantly, as we will see below.
\end{remark}

The following is the key step for simplifying the mapping spaces in $\Op(\cS)$.

\begin{theorem}\label{p:simplifying} There is a homotopy Cartesian square of spaces of the form
	\begin{equation}\label{eq:simplifying}
		\xymatrix{
			\Map^{\h}_{\Op_C(\cS)}(\P,f^*\Q)  \ar[r]\ar[d] & \Map^{\h}_{\Op(\cS)}(\P,\Q) \ar[d] \\
		\{*\} \ar[r]^{\underset{c\in C}{\prod}\{f(c)\}} &  \underset{C}{\prod}\cor(\Q_1) \;  \\
		} 
	\end{equation}
in which the bottom horizontal map is given by, for each $c\in C$, inserting the object $f(c) \in \Ob(\Q_1)$ into $\cor(\Q_1)$.
\begin{proof} Without loss of generality, we may assume that $\P\in\Op_C(\cS)$ is cofibrant. Since the initial $C$-colored operad $\I_C$ is cofibrant as an object of $\Op(\cS)$, it implies that $\P$ is as well cofibrant within $\Op(\cS)$ (cf \cite[Observation 4.1.1(ii)]{Hoang}). Now consider the following coCartesian square in $\Op(\cS)$:
	$$ \xymatrix{
		\I_C \coprod \I_C  \ar[r]\ar[d] & \I_C \coprod \P \ar[d] \\
		\I_C \ar[r] &  \P \;  \\
	} $$
Due to the discussion above, this square is automatically homotopy coCartesian when considered as a square in $\Op(\cS)_{\I_C/}$. We also regard $\Q$ as an object of $\Op(\cS)_{\I_C/}$ via the map $\eta_f :\I_C \lrar \Q$ that is determined uniquely by $f:C \lrar D$. Now, the above square yields a homotopy Cartesian square of spaces
	$$ \xymatrix{
		\Map^{\h}_{\Op(\cS)_{\I_C/}}(\P,\Q) \ar[r]\ar[d] & \Map^{\h}_{\Op(\cS)}(\P,\Q) \ar[d] \\
		\{\eta_f\} \ar[r] &  \Map^{\h}_{\Op(\cS)}(\I_C,\Q) \;  \\
	} $$
(In fact, a generalization of this holds true in any \textit{under $\infty$-category}, according to \cite[Lemma 5.5.5.12]{Luriehtt} with the opposite convention.) We shall complete the proof by showing the existence of natural weak equivalences:
  $$  \Map^{\h}_{\Op(\cS)_{\I_C/}}(\P,\Q) \simeq  \Map^{\h}_{\Op_C(\cS)}(\P,f^*\Q)  \;\; \text{and} \;\; \Map^{\h}_{\Op(\cS)}(\I_C,\Q) \simeq \underset{C}{\prod}\cor(\Q_1).$$
  
Let us start with the first weak equivalence. By Remark \ref{rem:canfib} and by the assumption that $\cS$ is sufficient (see Convention \ref{conv:adsu}), the adjunction \eqref{eq:signi} is a Quillen adjunction. Combining this  with the fact that $\I_C$ represents an initial object in $\Op_C(\cS)$, we obtain natural weak equivalences
$$ \Map^{\h}_{\Op_C(\cS)}(\P,f^*\Q) \simeq \Map^{\h}_{\Op_C(\cS)_{\I_C/}}(\P,f^*\Q) \simeq \Map^{\h}_{\Op(\cS)_{\I_C/}}(\P,\Q),$$
as expected.

\smallskip

For the second one, we will make use of the Quillen adjunction $\adjunction*{}{\Cat(\cS)}{\Op(\cS)}{}$ (see Remark \ref{r:signi1} and Remark \ref{rem:catop}). Combined with the observation that $\I_C$ can be identified with a category in $\cS$ exhibited as the $C$-fold coproduct of $[0]_\cS\in\Cat(\cS)$, that adjunction yields natural weak equivalences
$$ \Map^{\h}_{\Op(\cS)}(\I_C,\Q) \simeq \Map^{\h}_{\Cat(\cS)}(\I_C,\Q_1) \simeq \underset{C}{\prod}\Map^{\h}_{\Cat(\cS)}([0]_\cS,\Q_1) \, \x{\defi}{=} \, \underset{C}{\prod}\cor(\Q_1).$$
Moreover, under this identification, the map $\eta_f :\I_C \lrar \Q$ corresponds to exactly the embedding $\underset{c\in C}{\prod}\{f(c)\} \subseteq \underset{C}{\prod}\cor(\Q_1)$.
\end{proof}
\end{theorem}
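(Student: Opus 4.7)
The plan is to realize the desired homotopy Cartesian square as the one arising from a well-chosen homotopy coCartesian square in $\Op(\cS)$, then identify the corner spaces via the two adjunctions featured earlier in the paper, namely \eqref{eq:signi} and \eqref{eq:signi1}. First, I would reduce to the case that $\P\in\Op_C(\cS)$ is cofibrant, and then observe that the initial $C$-colored operad $\I_C$ is cofibrant in $\Op(\cS)$ (by the remark following \eqref{eq:signi}), so that $\P$ is automatically cofibrant in the larger category $\Op(\cS)$ as well. This is essential in order to manipulate coproducts homotopically.

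Next, I would consider the evident coCartesian square in $\Op(\cS)$
$$\xymatrix{ \I_C \coprod \I_C \ar[r]\ar[d] & \I_C \coprod \P \ar[d] \\ \I_C \ar[r] & \P }$$
where the left vertical arrow is the fold and the top horizontal arrow is induced by the identity together with the unit $\I_C\to\P$. Viewed in the under-category $\Op(\cS)_{\I_C/}$ (pointed by the fixed map $\eta_f : \I_C\to\Q$), this becomes a homotopy coCartesian square because the maps in question are cofibrations between cofibrant objects. Mapping into $\Q$ then produces a homotopy Cartesian square
$$\xymatrix{ \Map^{\h}_{\Op(\cS)_{\I_C/}}(\P,\Q) \ar[r]\ar[d] & \Map^{\h}_{\Op(\cS)}(\P,\Q) \ar[d] \\ \{\eta_f\} \ar[r] & \Map^{\h}_{\Op(\cS)}(\I_C,\Q) }$$
whose outer shape already matches \eqref{eq:simplifying}; what remains is to identify the two corners on the left column with the target corners.

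For the top-left identification, I would invoke the Quillen adjunction \eqref{eq:signi} together with the fact that $\I_C$ is initial in $\Op_C(\cS)$, which produces
$$ \Map^{\h}_{\Op(\cS)_{\I_C/}}(\P,\Q) \simeq \Map^{\h}_{\Op_C(\cS)_{\I_C/}}(\P,f^*\Q) \simeq \Map^{\h}_{\Op_C(\cS)}(\P,f^*\Q). $$
For the bottom-right identification, the key observation is that $\I_C$, considered as an $\cS$-enriched category, is the coproduct $\bigsqcup_{c\in C}[0]_\cS$; applying the Quillen adjunction \eqref{eq:signi1} then gives
$$ \Map^{\h}_{\Op(\cS)}(\I_C,\Q) \simeq \Map^{\h}_{\Cat(\cS)}(\I_C,\Q_1) \simeq \prod_{C}\Map^{\h}_{\Cat(\cS)}([0]_\cS,\Q_1) = \prod_{C}\cor(\Q_1), $$
and tracing through the constructions one sees that $\eta_f$ corresponds precisely to $\prod_{c\in C}\{f(c)\}$.

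The main obstacle I anticipate is the homotopy-invariance side of the argument, namely justifying that the coCartesian square above is actually \emph{homotopy} coCartesian in the under-category. This requires the cofibrancy of $\P$ in $\Op(\cS)$ (already arranged) and the knowledge that coproduct with a cofibrant object preserves cofibrations between cofibrant objects in the semi-model structure; some care is needed here because we are working merely with a semi-model structure. A secondary subtlety lies in verifying that $\I_C\in\Cat(\cS)$ is cofibrant and is the $C$-fold coproduct of $[0]_\cS$, both of which need to be unpacked from the construction of the adjunction \eqref{eq:signi1}, but this is ultimately formal once one spells out the free operad on a set of colors.
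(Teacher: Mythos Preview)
Your proposal is correct and follows essentially the same approach as the paper: the same coCartesian square in $\Op(\cS)_{\I_C/}$, the same passage to a homotopy Cartesian square of mapping spaces, and the same two identifications via the adjunctions \eqref{eq:signi} and \eqref{eq:signi1}. The only difference is cosmetic: the paper cites [\cite{Luriehtt}, 5.5.5.12] for the general under-$\infty$-category fiber sequence rather than arguing directly via cofibrancy, but your anticipated obstacles are exactly the points the paper glosses over.
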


Consequently, a connection between the mapping spaces in $\Op(\cS)$ and mapping spaces of operadic bimodules is achieved by combining the above theorem with Theorem \ref{t:goodness}.

\smallskip

To further elaborate, we shall now provide some additional examples derived from our current findings. 

\begin{example} First, consider the case where $\cS$ is the category of simplicial sets, $\P$ is $\Sigma$-cofibrant and the operad $\Q$ is such that $\Q(d;d') \simeq *$  for every $d,d'\in D$. Moreover, suppose we are given a map $f : \P \lrar \Q$ in $\Op(\Set_\Delta)$.  Clearly in this case we have $\cor(\Q_1) \simeq *$. Thus due to Theorem \ref{p:simplifying}, we get that
	$$ \Map^{\h}_{\Op(\cS)}(\P,\Q) \simeq \Map^{\h}_{\Op_C(\cS)}(\P,f^*\Q).$$ 
Furthermore, combining this with Theorem \ref{t:goodness}, we obtain that
  $$ \Omega\Map^{\h}_{\Op(\cS)}(\P,\Q) \simeq \Omega\Map^{\h}_{\Op_C(\cS)}(\P,f^*\Q) \simeq \Map^{\h}_{\BMod(\P)}(\P, f^*\Q)$$
 where the first loop space is taken at the base point $f : \P \lrar \Q$, while the second is taken at the map $\P\lrar f^*\Q$ induced by $f$.
\end{example}

\begin{example} Now consider the case where $\cS = \C_{\geqslant0}(R)$ with $R$ being a commutative ring containing $\QQ$, while $\P$ is $\Sigma$-cofibrant, and $\Q$ satisfies that $\Q(d;d') \simeq R$ for every $d,d'\in D$. As above, suppose we are given a map $f : \P \lrar \Q$ in $\Op(\C_{\geqslant0}(R))$. According to Example \ref{ex:coregen}, we may show that $\cor(\Q_1)$ is weakly equivalent to the \textit{Eilenberg–MacLane space} $K(R^\times, 1)$ in which as usual $R^\times$ signifies the group of units. Thus, in this case we obtain two fiber sequences
	$$ \Omega\Map^{\h}_{\Op_C(\C_{\geqslant0}(R))}(\P,f^*\Q) \lrar \Map^{\h}_{\BMod(\P)}(\P, f^*\Q) \lrar \underset{C}{\prod} \, R \,, \; \text{and}$$  
	$$ \Map^{\h}_{\Op_C(\C_{\geqslant0}(R))}(\P,f^*\Q) \lrar \Map^{\h}_{\Op(\C_{\geqslant0}(R))}(\P,\Q) \lrar \underset{C}{\prod} \, K(R^\times, 1) $$
respectively according to Theorems \ref{t:goodness} and \ref{p:simplifying}. 
\end{example}

\begin{example}\label{ex:ends} We now revisit our examination of the general base category $\cS$. Let us consider the case where $\Q = \End_\cS$, ie, the operad in $\cS$ whose colors are chosen from the collection of objects in $\cS$, and such that the spaces of operations are given by
	$$  \End_\cS(X_1,\cdots,X_n;X)  := \Map_{\cS}(X_1\otimes\cdots\otimes X_n,X).$$ 
Suppose we are given a collection $A = \{A(c)\}_{c\in C}$ of objects in $\cS$, regarded as a map from $C$ to $\Ob(\cS)$. Then the restriction along $A$ yields a $C$-colored operad $A^*(\End_\cS)$, which exactly coincides with the endomorphism operad $\End_{A}$ (see Example \ref{ex:end}). On other hand, notice that the underlying category $(\End_\cS)_1$ is nothing but $\cS \in \Cat(\cS)$. Theorem \ref{p:simplifying} hence yields a fiber sequence of the form
\begin{equation}\label{eq:ends}
	\Map^{\h}_{\Op_C(\cS)}(\P,\End_{A}) \lrar \Map^{\h}_{\Op(\cS)}(\P,\End_\cS) \lrar \underset{C}{\prod}\cor(\cS).
\end{equation}
\end{example}

\begin{remark}\label{r:ends1} One of the main interests in the operad $\End_\cS$ is that each map $\varphi : \P \lrar \End_\cS$ in $\Op(\cS)$ can be identified with a $\P$-algebra in $\cS$, whose underlying objects are given by the collection $\{\varphi(c)\}_{c\in C} \in \cS^{\times C}$. One may further expect that an analogue remains true when moving into the realm of homotopy theory, ie, there might be a weak equivalence of the form
	\begin{equation}\label{eq:ends1}
		\Map^{\h}_{\Op(\cS)}(\P,\End_\cS) \simeq \Alg_{\P}(\cS)_\infty^\simeq.
	\end{equation}
For example, this is the case for every simplicial operad (or $\infty$-\textit{operad}), according to  \cite[$\S$2.1.4]{Lurieha}. Furthermore, an advantage is that when \eqref{eq:ends1} holds true in $\cS$, then combined with \eqref{eq:ends}, it yields a homotopy Cartesian square  
$$ \xymatrix{
	\Map^{\h}_{\Op_C(\cS)}(\P,\End_{A})  \ar[r]\ar[d] & \Alg_{\P}(\cS)_\infty^\simeq \ar[d] \\
	\{A\} \ar[r] &  \underset{C}{\prod}\cor(\cS), \;  \\
} $$
which will provide a generalization for \cite[Theorem 1.2.10]{Rezk} of Rezk (see also Example \ref{r:Rezkfiber}). 
\end{remark}

\subsection{Double loop spaces of mapping spaces between enriched operads}\label{s:doule}

In this last subsection, we shall provide a proof of the statement \ref{t:double1}.

\begin{conv} We say that the base category $\cS$ is \textbf{abundant} if the following conditions hold:
	
		\smallskip
	
	(i) $\cS$ is sufficient in the sense of Convention \ref{conv:adsu}.
	
	\smallskip
	
	(ii) $\cS$ satisfies Lurie's \textbf{invertibility hypothesis} \cite[Definition A.3.2.12]{Luriehtt}.
	
		\smallskip
	
   (iii)	The monoidal unit $1_\cS$ is cofibrant.
\end{conv}

\begin{remark} Roughly speaking, the invertibility hypothesis requires that, for any category $\C\in\Cat(\cS)$ containing a morphism $f$, localizing $\C$ at $f$ does not change the homotopy type of $\C$ as long as $f$ represents an isomorphism in $\Ho(\C)$. This is quite common in practice, as discussed in \cite[$\S$3]{Hoang}. In particular, all the base categories of Example \ref{ex:basis} satisfy the invertibility hypothesis, and furthermore, are abundant in the sense above.
\end{remark}

We will let $\P\in \Op_C(\cS)$ be a $C$-colored operad in $\cS$, where 
	$C$ is a fixed set of colors. Moreover, suppose we are given a map $f : \P \lrar \O$ in $\Op(\cS)$. As usual, $f$ induces a map $\P\lrar f^*\O$ in $\Op_C(\cS)$. We also regard the latter as a map in $\BMod(\P)$.

\begin{theorem}\label{t:double} Suppose that the base category $\cS$ is abundant, and suppose that $\P\in \Op_C(\cS)$ is cofibrant and intrinsically stable (cf Definition \ref{S8}). Then there is a natural weak equivalence 
	$$  \Omega^2\Map^{\h}_{\Op(\cS)}(\P,\O) \simeq \Omega\Map^{\h}_{\BMod(\P)}(\P,f^*\O) $$
	in which the double loop space on the left is taken at the base point $f$, and while the loop space on the right is taken at the map $\P\lrar f^*\O$.
\end{theorem}

The proof will leverage an approach analogous to the one we used in proving \cite[Proposition 5.2.5]{Hoang}. In what follows, we assume that the hypothesis of the theorem above holds.

\begin{notation} We let $[1]_\cS \in \Cat(\cS)$ denote the category with objects $0, 1$ and with the mapping spaces $\Map_{[1]_\cS}(0,1) = \Map_{[1]_\cS}(0,0) = \Map_{[1]_\cS}(1,1) = 1_\cS$ and  $\Map_{[1]_\cS}(1,0) = \emptyset_\cS$. Next, denote by $[1]_\cS^{\sim}$ the category which is the same as the previous one except that $\Map_{[1]^{\sim}_\cS}(1,0) = 1_\cS$.
\end{notation}

\begin{remark}\label{r:01} For a category $\C \in \Cat(\cS)$, while a map $[0]_\cS \lrar \C$ corresponds to a single object in $\C$, a map of the form $[1]_\cS \lrar \C$ is classified by a single morphism in $\C$. Namely, the latter consists of a pair $(x,y)$ of objects in $\C$ and a map $1_\cS \lrar \Map_\C(x,y)$. In light of this, along with the fact that a trivial fibration in $\Cat(\cS)$ is in particular a levelwise trivial fibration (see $\S$\ref{s:22}), and that $1_\cS$ is cofibrant, one can show that the canonical map $[0]_\cS\coprod[0]_\cS \x{(0,1)}{\lrar} [1]_\cS$ is a cofibration.           
\end{remark}

Next, we take a factorization $$[1]_\cS \lrar \mathcal{E} \lrarsimeq [1]_\cS^{\sim}$$ of the canonical map $[1]_\cS \lrar [1]_\cS^{\sim}$ into a cofibration followed by a trivial fibration. We now obtain a sequence of maps
 \begin{equation}\label{eq:otimes}
[0]_\cS\coprod[0]_\cS \lrar [1]_\cS \lrar \mathcal{E} \lrarsimeq [1]_\cS^{\sim} \lrarsimeq [0]_\cS
\end{equation}
such that the first two maps are cofibrations, while the others are weak equivalences. 

\smallskip

We will write $-\otimes-$ for the  \textit{tensor product of categories in} $\cS$. For more details, the tensor product $\C \otimes \D$ of two categories $\C$ and $\D$ has as set of objects $\Ob(\C \otimes \D) := \Ob(\C) \times \Ob(\D)$ and such that for every $c,c'\in \Ob(\C)$ and $d,d'\in\Ob(\D)$ we have
  $$ \Map_{\C \otimes \D}((c,d),(c',d')) := \Map_\C(c,c') \otimes \Map_\D(d,d').$$

 Now applying $\I_C\otimes(-)$ (where $\I_C$ is viewed as an $\cS$-enriched category) to the sequence \eqref{eq:otimes}, we obtain another sequence of maps in $\Cat(\mathcal{S})$:
 \begin{equation}\label{eq:icotimes}
 	\I_C \coprod \I_C \lrar \I_C \otimes [1]_\cS \lrar \I_C \otimes \mathcal{E} \lrarsimeq  \I_C \otimes [1]_\cS^{\sim} \lrarsimeq \I_C.
 \end{equation}

\begin{remark}\label{r:01i} Note that, for any category $\C \in \Cat(\cS)$, there is an identification $\I_C \otimes \C \cong \underset{C}{\coprod} \C$. Due to this, we get that the first two maps in the sequence above are cofibrations. Moreover, we may obtain that the last two maps are again weak equivalences, after verifying  that the class of Dwyer-Kan equivalences between levelwise cofibrant categories is stable under (arbitrary) coproducts.               
\end{remark}

We are concerned with several $\cS$-enriched operads, including $\A$, $\B$, $\ovl{\A}$, and $\ovl{\B}$, which are organized into the following diagram of coCartesian squares in $\Op(\cS)$:

\begin{equation}\label{eq:pushoutsss}
	\xymatrix{
		\I_C \ar[r]\ar_{i_0}[d] & \P \ar[d] \\
		\I_C \, \coprod \, \I_C   \ar[r]\ar[d] & \P \, \coprod \,  \I_C  \ar[r]\ar[d] & \P \, \coprod \, \P \ar[d] \\
		\I_C \otimes [1]_\cS   \ar[r]\ar_{\psi}[d] & \ovl{\A}  \ar[r]\ar^{\ovl{\varphi}}[d] & \A \ar^{\varphi}[d] \\
		\I_C \otimes \mathcal{E}  \ar[r]\ar_{\simeq}[d] & \ovl{\B}  \ar[r]\ar^{\simeq}[d] & \B \ar^{\simeq}[d] \\
		\I_C \ar[r] & \P \ar[r] & \P \, \underset{\I_C}{\coprod} \, \P . \\
	}
\end{equation}

\begin{remark}\label{r:pushoutsss} By Remark \ref{r:01i} and the cofibrant requirement on $\P$, all maps in this diagram are cofibrations; except for the three bottom vertical maps, which are all weak equivalences. (Among these, the last two are a homotopy cobase change of the weak equivalence $\I_C \otimes \mathcal{E} \lrarsimeq \I_C$.) Moreover, the middle column is nothing but the (derived) image of the first column through the left Quillen functor $\eta_! : \Op(\cS)_{\I_C//\I_C} \lrar \Op(\cS)_{\P//\P}$ induced by the unit map $\eta : \I_C \lrar \P$. We also regard the three squares on the right as coCartesian squares in $\Op(\cS)_{\P//\P}$. Accordingly, we may consider $\A$, $\B$, $\ovl{\A}$, and $\ovl{\B}$ as objects in $\Op(\cS)_{\P//\P}$.    
\end{remark}

We regard $\Op(\cS)_{\P//\P}$ as a pointed model category with $\P$ as the base point, so that we may define the suspension functor $\Sigma(-)$ on this category.
\begin{lemma}\label{l:singlesus} The induced map $\Sigma(\varphi) : \Sigma(\A) \lrar \Sigma(\B)$ is a weak equivalence in $\Op(\cS)_{\P//\P}$.
\end{lemma}
\begin{proof} In fact, this claim can be derived from the proof of \cite[Lemma 5.2.10]{Hoang}. For readers' convenience, we still present here a detailed proof. First, by Remark \ref{r:pushoutsss} the map $\varphi$ is a homotopy cobase change in $\Op(\cS)_{\P//\P}$ of the map  $\ovl{\varphi} : \ovl{\A} \lrar \ovl{\B}$. Thus we just need to prove that the map $\Sigma(\ovl{\varphi})$ is a weak equivalence. Moreover, note that $\ovl{\varphi}$ coincides with the (derived) image of the map $\psi : \I_C\otimes [1]_\cS   \lrar \I_C\otimes \mathcal{E}$ through the left Quillen functor $\Op(\cS)_{\I_C//\I_C} \lrar \Op(\cS)_{\P//\P}$ (by Remark \ref{r:pushoutsss} again). Therefore, it will suffice to verify that $\Sigma(\psi)$ is a weak equivalence (in $\Cat(\cS)_{\I_C//\I_C}$). By the weak equivalence $\I_C\otimes \mathcal{E} \lrarsimeq \I_C$, we just need to show that the fold map 
$$\I_C\otimes \mathcal{E} \underset{\I_C\otimes [1]_\cS}{\coprod}\I_C\otimes \mathcal{E} \lrar \I_C\otimes \mathcal{E}$$
is a weak equivalence. For this, it suffices to prove that the fold map $$\mathcal{E} \underset{[1]_\cS}{\coprod}\mathcal{E} \lrar \mathcal{E}$$ is a weak equivalence (see Remark \ref{r:01i}), which immediately follows from the invertibility hypothesis on $\cS$.
\end{proof}

\begin{remark}\label{r:varphi} Combining the above lemma with Remark \ref{r:pushoutsss}, we now obtain a sequence of maps $\A \lrar \B \lrarsimeq \P\underset{\I_C}{\coprod}\P$, each of which becomes a weak equivalence after taking suspension:
	$$  \Sigma(\A) \lrarsimeq \Sigma(\B) \lrarsimeq \Sigma(\P\underset{\I_C}{\coprod}\P).$$
In light of this, we will regard $\Sigma(\A)$ and $\Sigma(\B)$ as two convenient models for the suspension $\Sigma(\P\underset{\I_C}{\coprod}\P)$.
\end{remark}

\begin{remark}\label{r:rho} We will be also interested in the following objects in $\Op(\cS)_{\P//\P}$:
	$$ \U := \P \underset{\I_C\coprod\I_C}{\coprod}\I_C \otimes [1]_\cS \; \;\; \text{and} \;\;\; \V := \P \underset{\I_C\coprod\I_C}{\coprod}\I_C \otimes \mathcal{E} \;.$$
According to the diagram \eqref{eq:pushoutsss}, these can be put into the following diagram of (homotopy) coCartesian squares in $\Op(\cS)_{\P//\P}$:
\begin{equation}
	\xymatrix{
	\P \, \coprod \, \P  \ar[r]^{\;\;\;\; \Id + \Id}\ar[d] & \P \ar[d] \\
	\A \ar[r]\ar_\varphi[d] &  \U \ar^\rho[d]  \\
	\B \ar[r] &  \V \\
} 
\end{equation}
where $\rho : \U \lrar \V$ is the canonical map induced by the map $[1]_\cS \lrar \mathcal{E}$. Due to this, we obtain that the induced map $$\Sigma(\rho) : \Sigma(\U) \lrarsimeq \Sigma(\V)$$ is a weak equivalence as well. 
\end{remark}

We are now in position to prove the main theorem of this subsection.

\smallskip

\begin{proof}[\underline{Proof of Theorem~\ref{t:double}}] The idea is first to represent the  spaces $\Omega^2\Map^{\h}_{\Op(\cS)}(\P,\O)$ and $\Omega\Map^{\h}_{\BMod(\P)}(\P,f^*\O)$ as the homotopy fibers of certain maps between mapping spaces in $\Op(\mathcal{S})_{\P/}$. 
	
\smallskip	
	
\textbf{Step 1.}	By adjunctions, we have a chain of weak equivalences $$\Map^{\h}_{\Op_C(\mathcal{S})}(\P, f^*\O) \, \simeq \, \Map^{\h}_{\Op_C(\mathcal{S})_{\P/}}(\P\bigsqcup\P, f^*\O) \, \simeq \, \Map^{\h}_{\Op(\mathcal{S})_{\P/}}(\P\underset{\I_C}{\coprod}\P, \O).$$ 
	 More concretely, the second weak equivalence follows from the canonical Quillen adjunction $\adjunction*{}{\Op_C(\mathcal{S})_{\P/}}{\Op(\mathcal{S})_{\P/}}{}$ (see the proof of Theorem \ref{p:simplifying}), along with the fact that the coproduct $\P\bigsqcup\P \in \Op_C(\mathcal{S})$, when considered as an object of $\Op(\cS)$, agrees with the pushout $\P\underset{\I_C}{\coprod}\P \in \Op(\mathcal{S})$. In the same fashion, we may obtain that
	 \begin{gather*}
	 	\underset{c\in C}{\prod}\Map^{\h}_{\cS}(1_{\cS}, f^*\O(c;c))  \simeq  \Map^{\h}_{\Op_C(\mathcal{S})}(\Free(\I_C), f^*\O)  \\
	 	\simeq \Map^{\h}_{\Op_C(\mathcal{S})_{\P/}}(\P\bigsqcup\Free(\I_C), f^*\O) \simeq \Map^{\h}_{\Op(\mathcal{S})_{\P/}}(\P\underset{\I_C}{\coprod}\Free(\I_C), \O)
	 \end{gather*}	 
	in which $\Free(\I_C)$ signifies the free $C$-colored operad generated by the $C$-collection $\I_C$. Thus we may rewrite the fiber sequence \eqref{eq:goodnessmain} for the map $\P\lrar f^*\O$ as
	$$  \Omega\Map^{\h}_{\Op(\mathcal{S})_{\P/}}(\P\underset{\I_C}{\coprod}\P, \O) \lrar \Map^{\h}_{\BMod(\P)}(\P,f^*\O) \lrar \Map^{\h}_{\Op(\mathcal{S})_{\P/}}(\P\underset{\I_C}{\coprod}\Free(\I_C), \O).$$
	On other hand, the fiber sequence \eqref{eq:simplifying} can equivalently be rewritten as
	$$ 	\Map^{\h}_{\Op(\mathcal{S})_{\P/}}(\P\underset{\I_C}{\coprod}\P, \O) \lrar \Map^{\h}_{\Op(\cS)}(\P,\O) \lrar \Map^{\h}_{\Op(\mathcal{S})_{\P/}}(\P\coprod\I_C, \O).$$
	Here, we made use of the identifications $$\Map^{\h}_{\Op(\mathcal{S})_{\P/}}(\P\coprod\I_C, \O) \simeq \Map^{\h}_{\Op(\mathcal{S})}(\I_C, \O) \simeq \underset{C}{\prod}\cor(\O_1)$$ (see the proof of Theorem \ref{p:simplifying} again). Consequently, we obtain two fiber sequences as follows
	$$
	\Omega\Map^{\h}_{\BMod(\P)}(\P,f^*\O) \lrar	 \Omega\Map^{\h}_{\Op(\mathcal{S})_{\P/}}(\P\underset{\I_C}{\coprod}\Free(\I_C), \O) \x{\alpha}{\lrar} \Omega\Map^{\h}_{\Op(\mathcal{S})_{\P/}}(\P\underset{\I_C}{\coprod}\P, \O),$$
	$$  \Omega^2\Map^{\h}_{\Op(\cS)}(\P,\O) \lrar \Omega^2\Map^{\h}_{\Op(\mathcal{S})_{\P/}}(\P\coprod\I_C, \O) \x{\beta}{\lrar} \Omega\Map^{\h}_{\Op(\mathcal{S})_{\P/}}(\P\underset{\I_C}{\coprod}\P, \O).$$

	\smallskip

\textbf{Step 2.} To complete the proof, we will find convenient models for the two maps $\alpha$ and $\beta$, and further describe how these can be related via a pair of natural weak equivalences.	

\smallskip

 Notice first that, when considered as an object of $\Op(\mathcal{S})$, the operad $\Free(\I_C)$ can be modeled by 
	 $$\Free(\I_C) \, \cong \, \I_C \, \underset{\I_C \coprod \I_C}{\coprod} \, \I_C\otimes [1]_\cS \in \Op(\mathcal{S}).$$
Due to this, we obtain an identification
	\begin{equation}\label{eq:U}
		\P\underset{\I_C}{\coprod}\Free(\I_C) \cong \P \underset{\I_C\coprod\I_C}{\coprod}\I_C \otimes [1]_\cS \; \x{\defi}{=} \; \U.
	\end{equation}
We hence obtain a model for the domain of $\alpha$:
$$ \Omega\Map^{\h}_{\Op(\mathcal{S})_{\P/}}(\P\underset{\I_C}{\coprod}\Free(\I_C), \O)  \simeq \Map^{\h}_{\Op(\mathcal{S})_{\P/}}(\Sigma(\U), \O).$$
On other hand, by Remark \ref{r:varphi} the codomain of $\alpha$ can naturally be modeled as:
$$ \Omega\Map^{\h}_{\Op(\mathcal{S})_{\P/}}(\P\underset{\I_C}{\coprod}\P, \O) \simeq \Map^{\h}_{\Op(\mathcal{S})_{\P/}}(\Sigma(\A), \O).$$
Moreover, under these identifications, the map $\alpha$ is  weakly equivalent to the map $$\Map^{\h}_{\Op(\mathcal{S})_{\P/}}(\Sigma(\U), \O) \x{\widetilde{\alpha}}{\lrar} \Map^{\h}_{\Op(\mathcal{S})_{\P/}}(\Sigma(\A), \O)$$ induced by the canonical map $\A \lrar \U$ (see Remark \ref{r:rho}).

\smallskip

	Next, because of the fact that $\P\coprod\I_C$ agrees with the (derived) image of $\I_C \coprod \I_C$ through the functor $\eta_! : \Op(\cS)_{\I_C//\I_C} \lrar \Op(\cS)_{\P//\P}$ (see Remark \ref{r:pushoutsss}), we obtain that 
		\begin{equation}\label{eq:V}
		\Sigma(\P\coprod\I_C) \simeq \eta_!\Sigma(\I_C \coprod \I_C) \simeq \eta_!(\I_C\underset{\I_C\coprod\I_C}{\coprod}\I_C \otimes \mathcal{E}) \cong \P \underset{\I_C\coprod\I_C}{\coprod}\I_C \otimes \mathcal{E} \; \x{\defi}{=} \; \V.
	\end{equation}	
In light of this, the domain of $\beta$ can be modeled as:
$$  \Omega^2\Map^{\h}_{\Op(\mathcal{S})_{\P/}}(\P\coprod\I_C, \O) \simeq \Map^{\h}_{\Op(\mathcal{S})_{\P/}}(\Sigma(\V), \O).$$
On the other hand, the codomain of $\beta$ can  be represented as:
$$ \Omega\Map^{\h}_{\Op(\mathcal{S})_{\P/}}(\P\underset{\I_C}{\coprod}\P, \O) \simeq \Map^{\h}_{\Op(\mathcal{S})_{\P/}}(\Sigma(\B), \O)$$
(see Remark \ref{r:varphi} again). Furthermore, under the above identifications, the map $\beta$ is by construction weakly equivalent to the map $$\Map^{\h}_{\Op(\mathcal{S})_{\P/}}(\Sigma(\V), \O) \x{\widetilde{\beta}}{\lrar} \Map^{\h}_{\Op(\mathcal{S})_{\P/}}(\Sigma(\B), \O)$$ that is induced by the canonical map $\B \lrar \V$.

\smallskip

We now obtain a commutative square 
$$ 	\xymatrix{
	\Map^{\h}_{\Op(\mathcal{S})_{\P/}}(\Sigma(\V), \O)  \ar^{\widetilde{\beta}}[r]\ar[d]_\simeq & \Map^{\h}_{\Op(\mathcal{S})_{\P/}}(\Sigma(\B), \O) \ar[d]^\simeq \\
	\Map^{\h}_{\Op(\mathcal{S})_{\P/}}(\Sigma(\U), \O) \ar_{\widetilde{\alpha}}[r] &  \Map^{\h}_{\Op(\mathcal{S})_{\P/}}(\Sigma(\A), \O) \;  \\
} $$
such that the top and bottom horizontal maps are respectively weakly equivalent to $\beta$ and $\alpha$, and while the two vertical maps are all weak equivalences (see Lemma \ref{l:singlesus} and Remark \ref{r:rho}). Thus we get a natural weak equivalence between homotopy fibers $$   \Omega^2\Map^{\h}_{\Op(\cS)}(\P,\O) \lrarsimeq \Omega\Map^{\h}_{\BMod(\P)}(\P,f^*\O)$$
as expected.
\end{proof}

\begin{remark} As in Lemma \ref{l:goodbase}, if $\cS$ is already an adequate base category then the statement of Theorem \ref{t:double} holds for every $\Sigma$-cofibrant operad in $\cS$. In particular, this is the case when $\cS$ is any of the categories in $\{\Set_\Delta, \sMod_{\textbf{k}}, \C_{\geqslant0}(R)\}$ with $\textbf{k}$ being any commutative ring and $R$ being a commutative ring containing $\QQ$.
\end{remark}

\begin{example} We may use Theorem \ref{t:double} to investigate mapping spaces between $\infty$-categories (or alternatively, simplicial categories). Suppose we are given a functor $ f : \C \lrar \D$ between $\infty$-categories. We will write $\textbf{S} := (\Set_\Delta)_\infty$ to denote the $\infty$-category of spaces. We may view the category $\Fun(\C^{\op} \times \C , \textbf{S})$ as a model for the category of $\C$-bimodules. Then we may regard $\C$ as a bimodule over itself via the functor
	$$ \Map_\C : \C^{\op} \times \C \lrar \textbf{S} , \; (x,y) \mapsto \Map_\C(x,y).$$
Also, the restriction along $f$ exhibits $\D$ as a $\C$-bimodule via the functor
$$ \Map_{f^*\D} : \C^{\op} \times \C \lrar \textbf{S}  , \; (x,y) \mapsto \Map_\D(f(x),f(y)).$$
Accordingly, we may rewrite
  $$ \Map^{\h}_{\BMod(\C)}(\C,f^*\D) \simeq \Map_{\Fun(\C^{\op} \times \C , \textbf{S})}(\Map_\C, \Map_{f^*\D}).$$
Next, after passing to the framework of simplicial categories, the unstraightening of $\Map_\C$ is exactly a model for the \textbf{twisted arrow} $\infty$-\textbf{category} $\Tw(\C)$, which comes together with a \textit{left fibration} $\Tw(\C) \lrar \C^{\op} \times \C$ (cf \cite[Proposition 5.2.1.11]{Lurieha}). Let us now write $\D^\star$ for the composed functor  
$$ \Tw(\C) \lrar \C^{\op} \times \C \x{\Map_{f^*\D}}{\lrar} \textbf{S},$$
which sends an edge $x \lrar y$ in $\C$ (viewed as an object of $\Tw(\C)$) to $\Map_\D(f(x),f(y))$. Due to the straightening-unstraightening adjunction, we may show that there is a weak equivalence of spaces
 $$\lim\D^\star  \simeq  \Map_{\Fun(\C^{\op} \times \C , \textbf{S})}(\Map_\C, \Map_{f^*\D}).$$
Combining this with Theorem \ref{t:double}, we now obtain a weak equivalence
  $$  \Omega^2\Map_{\Cat_\infty}(\C,\D)  \simeq \Omega\lim\D^\star.$$
\end{example}

\bigskip

\section*{Acknowledgements}

The author is grateful to the handling editor for the care and dedication devoted to the review process, and to the referees for their valuable suggestions, which have improved the presentation of the manuscript.

\bigskip

\bibliographystyle{amsplain}

\begin{thebibliography}{10}
	
	
		
		
			\bibitem{Ieke2} C. Berger and I. Moerdijk, {\it Resolution of coloured operads and rectification of homotopy algebras}, Contemp. Math. 431 (2007) 31-58.
		
		
		\smallskip
		
		

		
			\bibitem{Ieke} C. Berger and I. Moerdijk,  \textit{On the homotopy theory of enriched categories}, Q. J. Math. 64 (3) (2013) 805-846.
		
		\smallskip
		
		
		
		\bibitem{Bergner} J. E. Bergner, {\it A model category structure on the category of simplicial categories}, Trans. Amer. Math. Soc. 359 (5) (2007) 2043-2058.
		
		\smallskip
	
	
	\bibitem{Caviglia} G. Caviglia, {\it A model structure for enriched coloured operads}, preprint, \href{https://arxiv.org/abs/1401.6983}{arXiv:1401.6983}, 2014.
	
	\smallskip
	
	\bibitem{Dold}	A. Dold, {\it Homology of symmetric products and other functors of complexes}, Ann. Math., II Ser. 68 (1) (1958) 54-80.
	
	\smallskip
	
	\bibitem{Julien} J. Ducoulombier, {\it Delooping derived mapping spaces of bimodules over an operad}, J. Homotopy Relat. Struct. 14 (2019) 411-453.
	
	
	\smallskip
	
	
		
	\bibitem{Julien1} J. Ducoulombier, B. Fresse and V. Turchin, {\it Projective and Reedy model category structures for (infinitesimal) bimodules over and operad}, Appl. Categ. Struct. 30 (2022) 825-920.
	
	
	\smallskip
	
	
		\bibitem{Hess} W. G. Dwyer and K. Hess, {\it Long knots and maps between operads}, Geom. Topol. 16 (2012) 919-955.
	
	\smallskip
	
	\bibitem{DKcor} W. G. Dwyer and D. M. Kan, {\it A classification theorem for diagrams of simplicial sets}, J. Topol. 23 (1984) 139-155.
	
	\smallskip
	

	
		\bibitem{Fresse1} B. Fresse, {\it Modules over operads and functors}, Lecture Notes in Mathematics. 1967, Springer-Verlag, Berlin (2009).
	
	\smallskip
	

		
		\bibitem{Goerss} P. G. Goerss and J. F. Jardine, {\it Simplicial homotopy theory}, Progress in Mathematics. 174, Birkhäuser Verlag, Basel (1999).

	\smallskip
	
	
	\bibitem{Guti}  J. J. Gutiérrez and R. M. Vogt, {\it A model structure for coloured operads in symmetric spectra}, Math. Z. 270 (2012) 223-239.
	
	\smallskip
	
	
	\bibitem{Hoang}      T. Hoang, {\it Quillen cohomology of enriched operads}, Adv. Math. 465 (2025), 110151.
	
	\smallskip
	
	
	\bibitem{Hovey} M. Hovey, {\it Model categories}, Mathematical Surveys and Monographs. 63, Amer. Math. Soc. (1999).
	
	\smallskip
	

	\bibitem{Luriehtt} J. Lurie, {\it Higher topos theory}, Ann. Math. Stud. 170, Princeton University Press (2009).
	
	\smallskip
	
		\bibitem{Lurieha} J. Lurie, {\it Higher algebra}, preprint, available at author’s homepage \url{https://www.math.ias.edu/~lurie/} (2011).
	
	\smallskip
	
	
	\bibitem{Muro} F. Muro, {\it Dwyer-Kan homotopy theory of enriched categories}, J. Topol. 8  (2015) 377-413.
	
	\smallskip
	

	
	\bibitem{Pavlov} D. Pavlov and J. Scholbach, {\it Admissibility and rectification of colored symmetric operads}, J. Topol. 11 (3) (2018) 559-601.
	
	\smallskip
	
	\bibitem{Rezk} C. Rezk, {\it Spaces of algebra structures and cohomology of operads}, PhD Thesis, Massachusetts Institute of Technology (1996).

\smallskip

	

\bibitem{Schwede} S. Schwede and B. Shipley, {\it Equivalences of monoidal model categories}, Algebr. Geom. Topol. 3 (2003) 287-334.
	
	\smallskip
	
	\bibitem{Spitzweck} M. Spitzweck, {\it Operads, algebras and modules in general model categories}, preprint, \href{https://arxiv.org/abs/math/0101102}{arXiv:math/0101102}, 2001.
	
	\smallskip
	
	
	
	\bibitem{Tabuada} G. Tabuada, {\it Differential graded versus simplicial categories}, Topology Appl. 157 (3) (2010) 563-593.
	
	\smallskip
	
	
		
	\bibitem{Weibel} C. Weibel, {\it An introduction to homological algebra}, Cambridge Stud. Adv. Math. 38, Cambridge University Press, Cambridge (1994).
	
	\smallskip
	


\bibitem{WY} D. White and D. Yau, {\it Homotopical adjoint lifting theorem}, Appl. Categ. Struct. 27  (2019) 385-426.






































	

	

































\end{thebibliography}

\end{document}